\def\RR{\mathbb{R}}
\def\R{\mathbb{R}}
\def\fL{f_L}
\def\fnu{f_\nu}
\def\gn{\gamma_\nu} 
\def\div{\mathrm {div}\,}
\def\nl{\nabla_\lambda}
\def\Ll{L_\lambda}
\def\dH{\dot{H}}
\def\to{\rightarrow}
\def\P{\mathbb{P}}
\def\dt{\partial_t}
\def\na{\nabla}
\def\eps{\varepsilon}
\def\ds{\displaystyle}
\def\tabla{\overline{\na}}
\newtheorem{theo}{Theorem}[section]
\newtheorem{lemma}[theo]{Lemma}
\newtheorem{prop}[theo]{Proposition}
\newtheorem{definition}[theo]{Definition}
\newtheorem{rk&ex}[theo]{Remarks \& Examples}
\title[Global solutions to the  Quasi-Geostrophic equation]{Global weak solutions to the inviscid 3D Quasi-Geostrophic equation}
\author[Puel]{Marjolaine Puel}
\address[Marjolaine Puel]{\newline Laboratoire J.A. Dieudonn\'e  \newline Universit\'e de Nice-Sophia-Antipolis, Nice, France}
\email{Marjolaine.PUEL@unice.fr}
\author[Vasseur]{Alexis F. Vasseur}
\address[Alexis F. Vasseur]{\newline Department of Mathematics, \newline The University of Texas at Austin, Austin, TX 78712, USA}
\email{vasseur@math.utexas.edu}
\date{\today}
\subjclass[2010]{76B03,35Q86,35Q35} \keywords{Quasi-geostrophic equation, weak solution, global solution, inviscid}
\thanks{\textbf{Acknowledgment.} A. F. Vasseur was partially supported by the NSF Grant DMS 1209420. 
}
\begin{document}

\begin{abstract} 
In this article, the authors prove the existence of global weak solutions to the inviscid three-dimensional quasi-geostrophic equation. This equation models the evolution of the temperature on the surface of the earth. It is widely used in geophysics and meteorology. 
\end{abstract}

\maketitle \centerline{\date}

\section{Introduction}

In oceanography, the motion of the atmosphere follows the so-called fundamental equation. This is  the 3D Navier-Stokes equation with the effect of the rotation of the earth (Rosby effect). At large scale, this Rosby effect is very important. Asymptotically, this leads to the so-called geostrophic balance which enforces  the wind velocity to be  orthogonal to the gradient of the pressure in the atmosphere (see Salmon \cite{Salmon}, and Pedlosky \cite{Pedlosky}).  Asymptotic analysis can be performed (see Bourgeois and Beale \cite{Beale}, and Desjardins and Grenier \cite{Grenier}) to derive the quasi-geostrophic equation model (QG), which is not as complex as the fundamental equation, and not as trivial as the geostrophic balance, and still captures the large scale motion of the atmosphere. This model is extensively used in computations of oceanic and  atmospheric circulation, for instance,  to simulate global warming (see Abramov and Majda \cite{Abramov}, and Williams, Read, 
 and Haine \cite{Williams}).  
The quasi-geostrophic equation can be stated as follows. Let $\Psi$ be the stream function for the geostrophic flow. That is, the 3D velocity $(w,U)=(w,u,v)$ has its horizontal  component verifying
$$
(u,v)=(-\partial_{x_2} \Psi,\partial_{x_1}\Psi), \mathrm{\ \ or \ in \ short: \ } U=\tabla^{\perp}\Psi, 
$$
where we denote 
$$
\tabla \Psi=(0,\partial_{x_1} \Psi,\partial_{x_2}\Psi).
$$
Note that, as a convention,   we choose  the  first component, of any vector in the upper half space, to be  the vertical component.
From the model,  the buoyancy is given by
$$
\Theta=\partial_z\Psi.
$$
Let us denote the elliptic operator
$$
L_\lambda\Psi=\partial_z(\lambda\partial_z\Psi)+\bar{\Delta}\Psi,
$$
where $\bar{\Delta}$ stands for the Laplacian in two dimensions:
$$
\bar{\Delta}\Psi=
\partial_{x_1}^2\Psi+\partial_{x_2}^2\Psi,
$$
and  $\lambda=-1/\Theta^0_z$, is a given function, of $z$ only, associated to the buoyancy of a reference state. 
If we denote 
$$\nl\phi =( \lambda \partial_z\phi, \partial_{x_1} \phi, \partial_{x_2} \phi),
$$ 
we have
$$
\Ll \phi=\div (\nl \phi).
$$

Then, the whole dynamic is encoded in the function $\Psi$ which is governed by the following initial boundary problem:
\begin{eqnarray}
 \label{eq:QG}
&&(\dt +\tabla^\perp\Psi\cdot\nabla) (L_\lambda\Psi+\beta_0 x_2)=\fL, \qquad t>0,\ \   z>0,\ \  x\in\RR^2,\\
 \label{eq:QGtrace}
&&(\dt +\tabla^\perp\Psi\cdot\nabla) \gn(\nl \Psi)=\fnu, \qquad  t>0, \ \  z=0, \ \  x\in\RR^2,\\
\label{eq:QGinitial}
&& \Psi(0,z,x)=\Psi^0(z,x). \qquad t=0,\ \  z>0,\ \  x\in\RR^2. 
\end{eqnarray}

The parameter $\beta_0$ comes from the usual $\beta$-plane approximation.  The term $\gn(\nl\Psi)$ stands for the Neumann condition at $z=0$ associated to the operator $\Ll \Psi$.  If $\lambda$ is regular, this coincides with $-\lambda(0) \partial_z \Psi(0,\cdot)$. $\fL$ and $\fnu$ are given exterior forcing.
This equation corresponds to the inviscid version of the quasi-geostrophic equation where the Ekman pumping effect is neglected. 
The Ekman pumping comes from a turbulent viscosity at the surface of the earth due to friction. It adds a viscous terms of the form $r\partial^2_{zz}\Psi$ on the right hand side of Equation (\ref{eq:QGtrace}).

Both, the value of the elliptic operator $\Ll\Psi$, and the Neumann condition $\gn(\nl\Psi)$ at the boundary $z=0$, are advected by the stratified flow with velocity $U=\tabla^\perp\Psi$. At each time, $\Psi$ can be recovered, solving the boundary value elliptic equation.

The main difficulty, while dealing with this equation,  is due to the treatment of the boundary condition.
When the boundary is trivial, that is $\gn (\nl\Psi^0)=0$ at $z=0$, global classical solutions have been constructed in \cite{Beale}. In the case with boundary,
as to now, weak solutions have been constructed only in presence of the regularization effect of  the Ekman pumping  (see \cite{Grenier}). 

We assume that  the function $\lambda$ depends on $z$ only and is globally  bounded by above and by below away from zero. Namely, we assume there exists $\Lambda>0$ such that 
\begin{equation}\label{eq:lambda}
\frac{1}{\Lambda}\leq \lambda(z)\leq\Lambda,\qquad {\mathrm for } \ \ z\in \R^+. 
\end{equation}
\vskip0.3cm
Taking advantage of the incompressibility of the flow, both in $\R^+\times\R^2$ and at $z=0$ in $\R^2$, we say that $\Psi$ is a weak solution to (\ref{eq:QG}) (\ref{eq:QGtrace}) (\ref{eq:QGinitial}) if for every $R>0$ and $T>0$,  $\phi\in C^\infty(\R^4)$ supported in $(-T,T)\times(-R,R)^3$,  and $\bar{\phi}\in C^\infty(\R^3)$ supported in $(-T,T)\times(-R,R)^2$, we have:
\begin{eqnarray*}
&&-\int_0^T\int_0^\infty\int_{\R^2}\left[\left\{\dt \phi+\bar{\nabla}^\perp\Psi\cdot\bar{\nabla}\phi\right\}\left\{\Ll\Psi+\beta x_2\right\}-\fL\phi\right]\,dx\,dz\,dt\\
&&\qquad\qquad =\int_0^\infty\int_{\R^2}\phi(0,z,x)\left(\Ll\Psi^0+\beta x_2\right)\,dx\,dz,
\end{eqnarray*}
\begin{eqnarray*}
&&-\int_0^T\int_{\R^2}\left[\left\{\dt \bar{\phi}+\gamma_0(\bar{\nabla}^\perp\Psi)\cdot\bar{\nabla}\bar{\phi}\right\}\gn(\nl\Psi)-\fnu\bar{\phi}\right]\,dx\,dz\,dt\\
&&\qquad\qquad =\int_{\R^2}\bar{\phi}(0,x)\gn(\nl \Psi^0)\,dx,
\end{eqnarray*}
where $\gamma_0$ is the trace operator at $z=0$ verifying $\gamma_0(\phi)(x)=\phi(0,x)$ for any smooth function $\phi$ defined on $\R^+\times\R^2$. Note that for a vector valued function $\Phi\in[C^\infty(\R^+\times\R^2)]^3$, $\gn(\Phi)$ is the opposite of the vertical component of $\gamma_0(\Phi)$.
\vskip0.3cm
This paper is dedicated to the proof of the following result.
\begin{theo}\label{theo:main}
Assume that for every $T>0$, the source terms verify
\begin{eqnarray*}
&& \fL\in L^1(0,T;L^2(\R^+\times\R^2))\cap  L^1(0,T;L^{6/5}(\R^+\times\R^2)),\\
&& \fnu\in L^1(0,T;L^2(\R^2))\cap  L^{1}(0,T;L^{4/3}(\R^2)).
\end{eqnarray*}
 Consider  an initial value $\Psi^0$ such that
 \begin{eqnarray*}
  &&L_\lambda \Psi^0\  \mathrm{and} \    \nl \Psi^0 \ \mathrm{are \ in \  } L^2(\R^+\times\R^2),\qquad \gn (\nl\Psi^0)\in L^2(\R^2).
  \end{eqnarray*}
  
   Then, there exists $\Psi$  weak solution to (\ref{eq:QG}) (\ref{eq:QGtrace}) (\ref{eq:QGinitial}) on $(0,\infty)\times\R^+\times\R^2$, such that for every $T>0$,    $\nl\Psi \in L^\infty(0,T; L^2(\R^+\times\R^2))\cap C^0(0,T; L^2_{\mathrm{loc}}(\R^+\times\R^2))$,  with :
 \begin{eqnarray*}
 &&\|\Ll\Psi\|_{L^\infty(0,T;L^2(\R^+\times\R^2))}+\|\nabla\Psi\|_{L^\infty(0,T;L^2(\R^+\times\R^2))}\\
 &&\qquad+\|\gn(\nl\Psi)\|_{L^\infty(0,T;L^2(\R^2))}+\|\nabla\Psi\|_{L^\infty(0,T;L^3(\R^+\times\R^2))}\\
 &&\qquad +\|\nabla\Psi\|_{L^\infty(0,T;L^4(\R^+,L^{8/3}(\R^2)))}+\|\nabla\Psi\|_{L^\infty(0,T;L^{\infty}(\R^+;L^2(\R^2)))}\\
 && \leq C \Big(\|\nabla\Psi^0\|_{L^2(\R^+\times\R^2)})+\|\Ll\Psi^0\|_{L^2(\R^+\times\R^2)}+\|\gn(\nl\Psi^0)\|_{L^2(\R^2)}\\
&&\qquad\qquad +\|\fL\|_{ L^1(0,T;L^2(\R^+\times\R^2))}+\|\fL\|_{ L^1(0,T;L^{6/5}(\R^+\times\R^2))}\\
&&\qquad \qquad \left.+\|\fnu\|_{ L^1(0,T;L^2(\R^2))} +\|\fnu\|_{ L^{1}(0,T;L^{4/3}(\R^2))} \right).
 \end{eqnarray*}  
\end{theo}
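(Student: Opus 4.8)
The theorem is proved by a regularization and compactness argument; the only genuinely delicate point is the passage to the limit in the nonlinearity of the surface equation \eqref{eq:QGtrace}, for which I would use a compensated–compactness argument in the spirit of Resnick's construction of weak solutions to the surface quasi-geostrophic equation. The starting point is the transport–elliptic structure: setting $q=\Ll\Psi$ and $\theta=\gn(\nl\Psi)$, equation \eqref{eq:QG} becomes $\partial_t q+\tabla^\perp\Psi\cdot\nabla q=\fL-\beta_0\partial_{x_1}\Psi$ and \eqref{eq:QGtrace} becomes $\partial_t\theta+\gamma_0(\tabla^\perp\Psi)\cdot\bar{\nabla}\theta=\fnu$, both transport along divergence-free fields, while $\Psi$ is recovered from $(q,\theta)$ by solving the boundary value problem for $\Ll$. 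I would build approximate solutions $\Psi_\eps$ by a suitable regularization (mollifying $\lambda$ and the data $\Psi^0,\fL,\fnu$, and regularizing the advection so that smooth global solutions exist and the identities below hold up to errors vanishing with $\eps$). Incompressibility of the advecting fields propagates $L^2$: $\|\Ll\Psi_\eps(t)\|_{L^2}\le\|\Ll\Psi^0\|_{L^2}+\int_0^t(\|\fL\|_{L^2}+\beta_0\|\nabla\Psi_\eps\|_{L^2})\,ds$ and $\|\gn(\nl\Psi_\eps)(t)\|_{L^2}\le\|\gn(\nl\Psi^0)\|_{L^2}+\int_0^t\|\fnu\|_{L^2}\,ds$.

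To close these bounds I would use the energy identity. Set $\mathcal E_\eps(t)=\tfrac12\int_{\R^+\times\R^2}\nl\Psi_\eps\cdot\nabla\Psi_\eps\,dx\,dz$, comparable to $\|\nabla\Psi_\eps(t)\|_{L^2}^2$ by \eqref{eq:lambda}. Differentiating and integrating by parts, the advection terms cancel since $\tabla^\perp\Psi_\eps\cdot\bar{\nabla}\Psi_\eps=0$ and $\div(\tabla^\perp\Psi_\eps)=0$, and the $\beta_0$-term cancels since $\int\partial_{x_1}(\Psi_\eps^2)=0$, leaving $\tfrac{d}{dt}\mathcal E_\eps=-\int_{\R^+\times\R^2}\fL\,\Psi_\eps+\int_{\R^2}\fnu\,\gamma_0\Psi_\eps$. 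The Sobolev embedding $\|\Psi_\eps\|_{L^6(\R^+\times\R^2)}\lesssim\|\nabla\Psi_\eps\|_{L^2}$ and the trace inequality $\|\gamma_0\Psi_\eps\|_{L^4(\R^2)}\lesssim\|\nabla\Psi_\eps\|_{L^2(\R^+\times\R^2)}$ then give $\tfrac{d}{dt}\sqrt{\mathcal E_\eps}\lesssim\|\fL\|_{L^{6/5}}+\|\fnu\|_{L^{4/3}}$, i.e. the $L^\infty_tL^2$ bound on $\nabla\Psi_\eps$, which inserted back above closes the $L^\infty_tL^2$ bounds on $\Ll\Psi_\eps$ and $\gn(\nl\Psi_\eps)$. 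The three remaining norms in the statement follow from these $L^2$ bounds by elliptic regularity for $\Ll$: since $\lambda=\lambda(z)$, testing the elliptic equation against $-\bar{\Delta}\Psi_\eps$ controls $\bar{\nabla}^2\Psi_\eps$ and $\bar{\nabla}\partial_z\Psi_\eps$ in $L^2$, and anisotropic Gagliardo–Nirenberg and trace inequalities on the slices $\{z=\text{const}\}$ yield the $L^3(\R^+\times\R^2)$, $L^4_zL^{8/3}_x$ and $L^\infty_zL^2_x$ bounds on $\nabla\Psi_\eps$.

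It remains to pass to the limit. Up to a subsequence, $\nabla\Psi_\eps\rightharpoonup\nabla\Psi$, $\Ll\Psi_\eps\rightharpoonup\Ll\Psi$ and $\gn(\nl\Psi_\eps)\rightharpoonup\theta=\gn(\nl\Psi)$ weakly-$*$ in $L^\infty_tL^2$; bounding the time derivatives from the equations and invoking Aubin–Lions upgrades this to convergence in $C^0_tH^{-s}_{\mathrm{loc}}$, and since $\Ll\Psi,\theta\in L^2$ force $\Psi\in H^{3/2}_{\mathrm{loc}}(\overline{\R^+}\times\R^2)$, Rellich gives $\nabla\Psi_\eps\to\nabla\Psi$ strongly in $L^2_{\mathrm{loc}}(\overline{\R^+}\times\R^2)$, which is enough to pass to the limit in the bulk equation \eqref{eq:QG} (its advection term being a product of a strongly and a weakly convergent $L^2_{\mathrm{loc}}$ sequence). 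The step I expect to be the main obstacle is the surface equation \eqref{eq:QGtrace}: the surface velocity $\gamma_0(\tabla^\perp\Psi_\eps)=\bar{\nabla}^\perp\gamma_0\Psi_\eps$ is only bounded in $L^2(\R^2)$ with \emph{no} compactness, so the nonlinear term $\int\gamma_0(\tabla^\perp\Psi_\eps)\cdot\bar{\nabla}\bar\phi\,\gn(\nl\Psi_\eps)$ is genuinely quadratic in the merely weakly convergent sequence $\theta_\eps:=\gn(\nl\Psi_\eps)$. Writing $\gamma_0(\tabla^\perp\Psi_\eps)=\bar{\nabla}^\perp\mathcal N_\eps\theta_\eps+r_\eps$, where $\mathcal N_\eps$ is the inverse Dirichlet-to-Neumann operator of $\Ll$ on $\R^+\times\R^2$ (equal to $(-\bar{\Delta})^{-1/2}$ when $\lambda\equiv1$) and $r_\eps$ collects the contribution of $\Ll\Psi_\eps$ to the trace and is precompact in $L^2_{\mathrm{loc}}$, I would pass to the limit in the leading quadratic term à la Resnick: since $\mathcal N_\eps$ is self-adjoint and $\bar{\nabla}^\perp$ skew, symmetrizing in the two factors reduces it to $\tfrac12\int\big([\mathcal N_\eps^{-1},\,\bar{\nabla}^\perp\bar\phi\cdot\bar{\nabla}]\mathcal N_\eps\theta_\eps\big)\,\mathcal N_\eps\theta_\eps$, in which the commutator is a pseudodifferential operator of order at most one acting on $\mathcal N_\eps\theta_\eps$, a sequence bounded in $H^1_{\mathrm{loc}}$ hence precompact in $L^2_{\mathrm{loc}}$; this renders the term weakly continuous. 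The real difficulty is that $\lambda$ is only bounded, so $\mathcal N_\eps$ is not a priori a nice pseudodifferential operator; this is dealt with by keeping $\lambda$ regularized at this stage and controlling $\mathcal N_\eps$ through the explicit one-dimensional (in $z$) structure of $\Ll$.

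Finally, the limit $\Psi$ satisfies both weak formulations, the quantitative estimate follows from the $\eps$-uniform bounds by weak lower semicontinuity, the continuity $\nl\Psi\in C^0(0,T;L^2_{\mathrm{loc}})$ follows from the strong $L^2_{\mathrm{loc}}$ convergence together with the $L^1_t$ bound on $\partial_t\nl\Psi_\eps$, and the initial data is attained by that continuity.
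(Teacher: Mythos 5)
Your skeleton (transport of $q=\Ll\Psi$ and $\theta=\gn(\nl\Psi)$ along the divergence-free flow, the energy identity closing the $L^\infty_tL^2$ bounds, Aubin--Lions for interior compactness) is consistent with the paper's a priori estimates, but two steps do not close as written. The minor one: testing $\Ll\Psi=q$ against $-\bar{\Delta}\Psi$ does \emph{not} control $\bar{\nabla}^2\Psi$ in $L^2$, because the integration by parts in $z$ produces the boundary term $\int_{z=0}\gn(\nl\Psi)\,\bar{\Delta}\gamma_0\Psi\,dx$, which cannot be absorbed when $\gn(\nl\Psi)$ is only in $L^2(\R^2)$. The correct gain is half a horizontal derivative, $\nl\Psi\in L^2(\R^+;H^{1/2}(\R^2))$ (this is Proposition \ref{prop:apriori}, obtained by splitting $\Psi$ into the part driven by $\Ll\Psi$ with zero Neumann data and the $\Ll$-harmonic part driven by $\gn(\nl\Psi)$); that weaker gain still yields the $L^3$, $L^4(\R^+;L^{8/3}(\R^2))$ and $L^\infty(\R^+;L^2(\R^2))$ bounds by trace and interpolation, so this is repairable.

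The major gap is exactly where you locate the difficulty, the surface nonlinearity, and your Resnick-type symmetrization does not resolve it under the standing hypothesis (\ref{eq:lambda}) that $\lambda$ is merely bounded above and below. Your argument needs $[\mathcal N_\eps^{-1},\bar{\nabla}^\perp\bar\phi\cdot\bar{\nabla}]$ to be of order at most one, i.e.\ commutator bounds for the Dirichlet-to-Neumann operator of $\Ll$. Since $\lambda=\lambda(z)$, this operator is a Fourier multiplier $m(\xi)\asymp|\xi|$, but for rough $\lambda$ its symbol has no useful higher derivative bounds, so the required commutator estimate is not available; and "keeping $\lambda$ regularized at this stage" does not help, because you would then need the commutator bounds \emph{uniformly} in the regularization of $\lambda$, which is precisely what is missing. (The claimed precompactness of the remainder $r_\eps$ in the decomposition of $\gamma_0(\tabla^\perp\Psi_\eps)$ is also asserted, not proved.) The paper avoids the issue entirely: it never passes to the limit in the trace equation. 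It reformulates the full system as the single interior equation $\partial_t\nl\Psi+\P_{\lambda}(\bar{\nabla}^\perp\Psi\cdot\bar{\nabla}\nl\Psi)=\beta\,\P_{\lambda}(\Psi e_1)+\nl F$ for the weighted Hodge projection $\P_\lambda$ (Theorem \ref{theo:equivalence}); because $\P_\lambda$ is bounded on $L^2$ and commutes with $\bar{\div}$, the strong $C^0(0,T;L^2_{\mathrm{loc}})$ convergence of $\nl\Psi_n$ that your Aubin--Lions step already provides suffices to pass to the limit in the quadratic term $\bar{\nabla}^\perp\Psi\otimes\nl\Psi$, and the boundary equation for the limit is then recovered a posteriori from the equivalence theorem. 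To complete your proof you should either adopt this reformulation or actually supply the commutator theory for the rough Dirichlet-to-Neumann operator.
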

Note that the estimates, in the theorem, ensure that the weak formulation of  (\ref{eq:QG}) (\ref{eq:QGtrace}) (\ref{eq:QGinitial})  is well defined.
\vskip0.3cm

The main difficulty is that we cannot obtain compactness on the trace $\gn(\nl \Psi)$. However, thanks to a reformulation of the problem (\ref{eq:QG}) (\ref{eq:QGtrace}) (\ref{eq:QGinitial}) (see  Section \ref{section:reformulation}), we can obtain the following stability result.
\begin{theo}\label{prop:stability}
Take  $1<q\leq\infty$, and $T>0$. Consider $\fL^n$ and $\fnu^n$ uniformly bounded, respectively, in $ L^q(0,T;L^2(\R^+\times\R^2))\cap  L^q(0,T;L^{6/5}(\R^+\times\R^2))$ and in $ L^q(0,T;L^2(\R^2))\cap  L^{q}(0,T;L^{4/3}(\R^2))$. 
Let $\Psi_n$ be a sequence of solutions to  (\ref{eq:QG}) (\ref{eq:QGtrace}) (\ref{eq:QGinitial}), with initial value $\Psi^0_n$, and source terms $\fL^n$ and $\fnu^n$, such that 
$\nl \Psi_n\in C^0(0,T;L^2_{\mathrm{loc}}(\R^+\times\R^2))$, $\Ll \Psi_n$ and $\gn(\nl \Psi_n)$ are uniformly bounded, respectively, in the spaces $L^\infty(0,T; L^2(\R^+\times\R^2))$, $L^\infty(0,T; L^2(\R^2\times\R_+))$, and $ L^\infty(0,T;L^2(\R^2))$. Assume that the initial values $\nl\Psi^0_n$ converges to $\nl\Psi^0$ in $L^2(\R^+\times\R^2)$. Then, there exists $\Psi$ solution 
to  (\ref{eq:QG}) (\ref{eq:QGtrace}) (\ref{eq:QGinitial}), 
such that, up to a subsequence, $\nabla \Psi_n$ converges to $\nabla \Psi$ in 
$C^0(0,T; L^2_{\mathrm{loc}}(\R^+\times \R^2))$.
\end{theo}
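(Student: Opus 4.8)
The plan is to establish compactness of the sequence $\nabla\Psi_n$ by combining the uniform elliptic estimates with a suitable control of the time derivative, so that an Aubin--Lions type argument produces strong convergence in $C^0(0,T;L^2_{\mathrm{loc}})$, and then to pass to the limit in the weak formulation. First I would use the uniform bounds on $\Ll\Psi_n$ in $L^\infty(0,T;L^2(\R^+\times\R^2))$ together with the ellipticity \eqref{eq:lambda} to recover, by standard elliptic regularity for the operator $\Ll$ on the half-space with Neumann condition, that $\nabla\Psi_n$ (in fact $D^2\Psi_n$) is uniformly bounded in $L^\infty(0,T;L^2)$, and that the traces are controlled in the spaces stated; the anisotropic bounds $\nabla\Psi_n\in L^\infty(0,T;L^3)$, $L^\infty(0,T;L^4_zL^{8/3}_x)$, etc., come from Sobolev embeddings/interpolation applied to this elliptic gain of derivatives, exactly as in the a priori estimates behind Theorem \ref{theo:main}. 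This gives, for each fixed $t$, weak-$L^2$ limits $\nabla\Psi_n(t)\wto\nabla\Psi(t)$ along a subsequence, and (by a diagonal argument over a countable dense set of times plus equicontinuity, see below) a limit function $\Psi$ with $\nl\Psi\in L^\infty(0,T;L^2)\cap C^0(0,T;L^2_{\mathrm{loc}})$.

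The heart of the matter is the strong convergence, and this is where the reformulation of Section \ref{section:reformulation} is essential: since we cannot control the time derivative of the trace $\gn(\nl\Psi_n)$ (the stated obstruction), we do not work with \eqref{eq:QGtrace} directly. Instead, in the reformulated system the quantity that is transported — call it $\omega_n$, a combination of $\Ll\Psi_n$ in the bulk and the surface datum, extended appropriately — satisfies a transport equation $\dt\omega_n+\tabla^\perp\Psi_n\cdot\nabla\omega_n=(\text{forcing})$ with divergence-free velocity $\tabla^\perp\Psi_n$, and crucially the velocity is more regular than $\omega_n$ by the elliptic gain. Testing the transport equation against smooth compactly supported functions, and using that $\tabla^\perp\Psi_n$ is uniformly bounded in, say, $L^\infty(0,T;L^3_{\mathrm{loc}})$ while $\omega_n$ is uniformly bounded in $L^\infty(0,T;L^2)$, one gets a uniform bound on $\dt\omega_n$ in $L^q(0,T;H^{-s}_{\mathrm{loc}})$ for some $s>0$ (here the time exponent $q>1$ of the forcing enters, which is why $q>1$ is assumed rather than $q=1$). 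Combining the uniform $L^\infty(0,T;L^2)$ bound on $\omega_n$, the compact embedding $L^2_{\mathrm{loc}}\hookrightarrow\hookrightarrow H^{-s}_{\mathrm{loc}}$, and Aubin--Lions--Simon, we obtain $\omega_n\to\omega$ strongly in $C^0(0,T;H^{-s}_{\mathrm{loc}})$; interpolating this with the uniform $L^\infty_tL^2$ bound upgrades it to $\omega_n\to\omega$ strongly in $C^0(0,T;H^{-s'}_{\mathrm{loc}})$ for every $s'>0$, and then, by the elliptic estimates (which bound $\nabla\Psi_n$ by $\omega_n$ plus lower order), to $\nabla\Psi_n\to\nabla\Psi$ strongly in $C^0(0,T;L^2_{\mathrm{loc}})$, hence also $\nabla\Psi_n\to\nabla\Psi$ in $L^2_{\mathrm{loc}}$ in space-time. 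I would handle the initial data by noting that $\omega_n(0)$ is determined by $\nl\Psi^0_n\to\nl\Psi^0$ in $L^2$, so $\omega(0)$ matches the prescribed initial condition.

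With strong convergence of $\nabla\Psi_n$ in hand, passing to the limit in the two weak formulations is routine but must be done carefully term by term. In the bulk equation, the product $\bar\nabla^\perp\Psi_n\cdot\bar\nabla\phi\,\{\Ll\Psi_n+\beta x_2\}$ converges because $\bar\nabla^\perp\Psi_n\to\bar\nabla^\perp\Psi$ strongly in $L^2_{\mathrm{loc}}$ (space-time) on the support of $\phi$, while $\Ll\Psi_n\wto\Ll\Psi$ weakly in $L^2_{\mathrm{loc}}$ — a strong-times-weak product passes to the limit; the linear terms and the forcing terms ($\fL^n$ uniformly bounded, so weakly convergent up to subsequence, with the limit still admissible by lower semicontinuity of the norms) are immediate. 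For the trace equation, this is where the reformulation pays off again: rather than passing to the limit in \eqref{eq:QGtrace} as written — impossible without compactness of $\gn(\nl\Psi_n)$ — one recovers \eqref{eq:QGtrace} for the limit from the reformulated transport equation satisfied by $\omega$, using that the trace of $\nl\Psi_n$ does appear, but only inside the already-convergent transported quantity $\omega_n$, together with the strong convergence of the velocity $\gamma_0(\bar\nabla^\perp\Psi_n)=\bar\nabla^\perp\Psi_n(0,\cdot)$ in $L^2_{\mathrm{loc}}(\R^2)$, which follows from the strong $C^0_tL^2_{\mathrm{loc}}$ convergence of $\nabla\Psi_n$ up to the boundary (the elliptic estimates give control of $\nabla\Psi_n$ in, e.g., $L^\infty(0,T;L^\infty(\R^+;L^2(\R^2)))$, so traces converge strongly). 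The main obstacle, and the place where the argument is genuinely subtle rather than routine, is precisely the derivation and exploitation of the reformulated transport structure: one must verify that the reformulation of Section \ref{section:reformulation} produces a single scalar transported quantity whose transport velocity is controlled by that very quantity through the elliptic problem (so that no independent control of the trace's time derivative is needed), and that the boundary term in \eqref{eq:QGtrace} is faithfully encoded in it — getting the functional-analytic bookkeeping right there (which negative Sobolev space, how the surface and bulk pieces are glued, uniformity in $n$ of all constants) is the crux, everything else being standard weak-compactness and Aubin--Lions machinery.
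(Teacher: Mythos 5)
Your overall architecture (uniform bounds, time-derivative control, Aubin--Lions, passage to the limit in the reformulated equation) matches the paper's, but the central compactness step as you describe it has a genuine gap, and it stems from a misreading of what the reformulation of Section \ref{section:reformulation} actually is. You propose to obtain strong convergence of a transported scalar $\omega_n$ (``a combination of $\Ll\Psi_n$ and the surface datum'') in $C^0(0,T;H^{-s}_{\mathrm{loc}})$ and then to \emph{upgrade} this to strong convergence of $\nabla\Psi_n$ in $C^0(0,T;L^2_{\mathrm{loc}})$ ``by the elliptic estimates which bound $\nabla\Psi_n$ by $\omega_n$ plus lower order.'' But $\nabla\Psi_n$ is \emph{not} determined by $\Ll\Psi_n$ alone: the elliptic problem reconstructing $\Psi_n$ needs the Neumann datum $\gn(\nl\Psi_n)$ as independent input, and the absence of compactness for precisely this trace is the stated obstruction of the whole paper. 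Declaring the surface datum ``extended appropriately'' into $\omega_n$ does not close this: you would have to exhibit the extension (presumably something like $\Ll\Psi_n$ plus a surface distribution carried by $\{z=0\}$), show that this combined object obeys a single transport equation, and prove that the solution map from it (known only in $H^{-s}_{\mathrm{loc}}$) back to $\nabla\Psi_n$ is continuous into $L^2_{\mathrm{loc}}$ up to the boundary. You yourself flag this bookkeeping as ``the crux''; it is exactly the part that is missing. (A smaller inaccuracy: the hypotheses give $\nabla\Psi_n$ bounded in $L^\infty(0,T;H^{1/2})$, not $D^2\Psi_n$ in $L^\infty(0,T;L^2)$; an $L^2$ Neumann datum yields only half a derivative of gain, which is however enough for what follows.)

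The paper avoids the issue entirely by never returning to a scalar vorticity-type variable: the reformulation (\ref{eq:QGreformule}) is an equation for the \emph{vector field} $\nl\Psi$ itself (the analogue of the velocity formulation of Euler, not the vorticity one), with the nonlinearity written as $\P_\lambda\bar{\div}(\bar\nabla^\perp\Psi\otimes\nl\Psi)$ so that only horizontal derivatives fall on the quadratic term and no boundary term is produced. Aubin--Lions is then applied \emph{directly} to $\nl\Psi_n$, with $X_1=H^{1/2}((0,R)\times(-R,R)^2)$ (a bound coming from Proposition \ref{prop:apriori} together with the interpolation (\ref{eq:Interpolation}), which uses only \emph{boundedness} of the trace in $L^2$, never its compactness), $X_0=L^2$, and $X_{-1}=L^2(0,R;H^{-3}((-R,R)^2))$ for the time derivative via Theorem \ref{theo:equivalence}. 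The strong limit then satisfies (\ref{eq:QGreformule}) by the strong-times-weak product argument you describe, and (\ref{eq:QG}), (\ref{eq:QGtrace}), (\ref{eq:QGinitial}) follow from the equivalence Theorem \ref{theo:equivalence} rather than from a separate passage to the limit in the trace equation. To salvage your route you would need to carry out the combined bulk/surface vorticity construction in full at this low regularity; the simpler fix is to switch to the paper's direct compactness on $\nl\Psi_n$.
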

Note that we do not claim that the trace $\gn(\nl\Psi_n)$ converges strongly in $L^2$ to $\gn(\nl\Psi)$. So we cannot pass directly in the limit in Equation (\ref{eq:QGtrace}). However, thanks to the reformulation problem, it can be ensured that the trace of the limit verifies Equation  (\ref{eq:QGtrace}).
\vskip0.2cm

In the 90's, Constantin, Majda,  and Tabak  \cite{Tabak} began a thorough study of a simplified case. First, fix  $\beta_0=0$, $L=\Delta$, the usual Laplacian (that is $\lambda(z)\equiv1$). Note that if the initial value 
$\Psi^0$ is harmonic, then the first transport equation ensures that it stays harmonic for all times. 
This case is very interesting. While simplifying a lot the equation inside the atmosphere, it keeps all the difficulties due to the boundary. This model, known now as the surface quasi-geostrophic  equation (SQG), can be expressed at the boundary only, as follows. Consider $\theta=\Psi_z$ at $z=0$.
Then $\theta$ is solution to
\begin{eqnarray}
&&\dt\theta+U.\nabla\theta=0,\qquad t>0, (x,y)\in\RR^2,\\
&&\theta=\theta_0,\qquad t=0, (x,y)\in\RR^2,
\end{eqnarray}
and the velocity $U$ can be expressed in $\RR^2$, via a nonlocal operator, as
$$
U=\nabla^\perp \Delta^{-1/2}\theta.  
$$
This model has been popularized as a toy problem for 3D fluid mechanics (see Constantin \cite{Constantin_Lecture} and Held, Pierrehumbert,  Garner, and Swanson, \cite{Held}). The equivalent of Theorem \ref{theo:main} and Theorem  \ref{prop:stability} for the SQG equation is proven in \cite{Tabak}, using different techniques.

\section{Reformulation of the problem}\label{section:reformulation}

For any  $s, k\in \R$, $s\leq 1$, we denote 
\begin{eqnarray*}
&&\dH^s(\R^2)=\{u\in \mathcal{D}'(\R^2); \ \ \ \bar{\Delta}^{s/2} u\in L^2(\R^2)\},\\
&&H^k(\R^2)=\{u\in \mathcal{D}'(\R^2); \ \ \ (\bar{\Delta}-1)^{s/2} u\in L^2(\R^2)\}.
\end{eqnarray*}
\vskip0.3cm

We consider the following Hodge decomposition in $L^2(\R^+\times\R^2)$:
For any $u\in L^2(\R^+\times\R^2)$, there exists a unique $\nl\phi\in L^2(\R^+\times\R^2)$, and a unique $\mbox{curl}v\in L^2(\R^+\times\R^2)$, with $\mbox{curl}v\cdot\nu=0$ at $z=0$, such that 
$$
u=\nabla_\lambda\phi+\mbox{curl}v=\P_\lambda u+\P_{\mathrm{curl}} u.
$$ 
This Hodge decomposition with $\lambda\equiv 1$ is the classical one used to construct the Euler equation in the half space (see Temam \cite{Temam}). It can be extended to general $\lambda$ (see Lions \cite{Lions}). We recall that the trace of the vertical component of  $\mbox{curl}v$ exists in $\dH^{-1/2}(\R^2)$, since $\div (\mbox{curl}v)=0\in L^2(\R^+\times\R^2)$ (see Lemma \ref{lemm:trace}), while the trace of $u$ and $\nl\phi$ cannot be defined in general. The decomposition defines two projection operators well defined on $L^2(\R^+\times\R^2)$. For the sake of completeness, the decomposition is carefully constructed below. We also extend it to any  spaces $L^2(\R^+;\dH^s(\R^2)+H^k(\R^2))$ for $s\leq1$ and $k\in\R$.

\vskip0.3cm
In this section we will show that Problem (\ref{eq:QG}) (\ref{eq:QGtrace}) (\ref{eq:QGinitial}) is equivalent to the following problem:

\begin{equation}\label{eq:QGreformule}
\left\{
\begin{array}{l}
\partial_t\nabla_\lambda\Psi+\P_{\lambda}(\bar\nabla\Psi^\perp\cdot\bar\nabla\nabla_\lambda\Psi)=\beta\  \P_{\lambda}(\Psi e_1)+\nl F,\quad \mbox{on } \RR^+\times \RR^2\times\R^+\\[0.15cm]
\nabla_\lambda\Psi_{|_{t=0}}=\nabla_\lambda\Psi^0\qquad \mbox{ on  } \R^+\times\R^2,
\end{array}\right.\end{equation}
where $e_1=(0,1,0)$ is the first direction in the horizontal plane, and $F(t,\cdot,\cdot)$ is the solution to the elliptic equation with Neumann condition:
\begin{equation}\label{defF}
\Ll F=\fL, \ \ \mathrm{in} \  \ \R^+\times \R^2,\qquad \gn(\nl F)=\fnu, \ \ \mathrm{on} \ \R^2.
\end{equation}
\vskip0.3cm
We say that $\nl\Psi$ is a weak solution to (\ref{eq:QGreformule}), if for any $R>0$, $T>0$, and any $\phi\in C^\infty(\R^4)$ supported in $(-T,T)\times (-R,R)^3:$
\begin{eqnarray*}
&&-\int_0^T\int_0^\infty\int_{\R^2}\nl\Psi\cdot\left\{\dt\nabla\phi+\bar{\nabla}^\perp\Psi\cdot\bar{\nabla}\nabla\phi\right\}\,dx\,dz\,dt\\
&&\qquad -\beta\int_0^T\int_0^\infty\int_{\R^2}\left(\partial_1\phi\ \Psi+\nabla\phi\cdot\nl F\right)\,dx\,dz\,dt\\
&&\qquad\qquad=\int_0^\infty\int_{\R^2}\nabla\phi(0,z,x)\cdot\nl\Psi^0\,dx\,dz.
\end{eqnarray*}
The following theorem shows that the Problem  (\ref{eq:QG}) (\ref{eq:QGtrace}) (\ref{eq:QGinitial}) and  (\ref{eq:QGreformule}) are equivalent and that the weak formulation of (\ref{eq:QGreformule}) is a consistent definition of solution in the sense  of distribution of (\ref{eq:QGreformule}). 

\begin{theo}\label{theo:equivalence}
 Let $\Psi$ be such that,  for every  $T>0$,  $\nabla\Psi\in L^\infty(0,T;L^2(\R^+\times\R^2))$, $\Ll\Psi\in L^\infty(0,T;L^2(\R^+\times\R^2))$, and $\gn(\nl\Psi)\in L^\infty(0,T;L^2(\R^2))$ with $\fL\in L^1(0,T;L^2(\R^+\times\R^2))\cap  L^1(0,T;L^{6/5}(\R^+\times\R^2))$,  $\fnu\in L^1(0,T;L^2(\R^2))\cap  L^{1}(0,T;L^{4/3}(\R^2))$, then we have
 \begin{eqnarray*}
 &&\|\bar{\nabla}^\perp\Psi\otimes\nl\Psi\|_{L^\infty(0,T;L^2(\R^+;\dH^{-1/2}(\R^2)))} \\
&& \qquad\qquad \leq C\left( \|\Ll\Psi\|_{L^\infty(0,T;L^2(\R^+\times\R^2))}+\|\gn(\nl\Psi)\|_{L^\infty(0,T;L^2(\R^2))}\right.\\
&& \qquad\qquad\qquad\left.+\|\nl\Psi\|_{L^\infty(0,T;L^2(\R^+\times\R^2))}\right)^2,\\
  \end{eqnarray*}

  Let $F$ be defined by (\ref{defF}), it satisfies
  
  \begin{eqnarray*}&&\|\nl F\|_{L^1(0,T;L^2(\R^+\times\R^2))}\\
 &&\qquad\qquad\leq C\left(\|\fL\|_{L^1(0,T;L^{6/5}(\R^+\times\R^2))}+\|\fnu\|_{L^{1}(0,T;L^{4/3}(\R^2))}\right),
 \end{eqnarray*}
 and the following statements are equivalent:
 \begin{enumerate}
 \item The function  $\Psi$ is a weak solution to  (\ref{eq:QG}) (\ref{eq:QGtrace}) (\ref{eq:QGinitial}).
 \item The function  $\Psi$ is a weak solution to   (\ref{eq:QGreformule}).
\item For any $R>0$ and $T>0$, and any $\phi_1\in C^{\infty}(\R)$ and $\Phi_2\in (C^{\infty}(\R^3))^3$ supported respectively in $(-T,T)$ and $(-R,R)^3$
 \begin{eqnarray*}
 &&-\int_0^T\int_0^\infty\int_{\R^2}\nl\Psi\cdot\Phi_2(z,x)\dt\phi_1(t)\,dx\,dz\,dt\\
&& \qquad+\int_0^T\int_0^\infty \phi_1(t)   _{H^{3/2}(\R^2)}\langle \Phi_2(z,x);\P_\lambda\bar{\div}(\bar{\nabla}^{\perp}\Psi\otimes \nabla\Psi)\rangle_{H^{-3/2}(\R^2)}\,dz\,dt\\
&& =\beta\int_0^T\int_0^\infty  \phi_1(t) _{\dH^{-1}(\R^2)}\langle \Phi_2(z,x);\P_{\lambda}(\Psi e_1)\rangle_{\dH^1(\R^2)}\,dz\,dt\\
&&\qquad +\int_0^T\int_0^\infty\int_{\R^2}\phi_1(t)\Phi_2(z,x)\cdot\nl F\,dx\,dz\,dt\\
&&\qquad\qquad+\int_0^\infty\int_{\R^2}\phi_1(0)\Phi_2(z,x)\cdot\nl\Psi^0\,dx\,dz.
 \end{eqnarray*}

 Especially, $\dt\Psi\in L^\infty(0,T;L^2(\R^+;H^{-3/2}(\R^2)))$ with
 \begin{eqnarray*}
 &&\qquad\qquad \|\dt\Psi\|_{ L^\infty(0,T;L^2(\R^+;H^{-3/2}(\R^2)))}\\
 &&\leq  \|\fL\|_{L^1(0,T;L^{6/5}(\R^+\times\R^2))}+\|\fnu\|_{L^{1}(0,T;L^{4/3}(\R^2))}\\
 &&\qquad\qquad  +C\left(  \|\Ll\Psi\|^2_{L^\infty(0,T;L^2(\R^+\times\R^2))}+\|\gn(\nl\Psi)\|^2_{L^\infty(0,T;L^2(\R^2))}\right.\\
&&\qquad \qquad\qquad\left.+\|\nl\Psi\|_{L^\infty(0,T;L^2(\R^+\times\R^2))}\right).
 \end{eqnarray*}
  \end{enumerate} 
 \end{theo}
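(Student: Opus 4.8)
The plan is to prove Theorem~\ref{theo:equivalence} by treating the three pieces separately: first the two a~priori estimates (the bilinear bound on $\bar\nabla^\perp\Psi\otimes\nl\Psi$ and the bound on $\nl F$), then the chain of equivalences $(1)\Leftrightarrow(2)\Leftrightarrow(3)$, with the time-regularity statement falling out of $(3)$.

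\textbf{Step 1: the elliptic bound on $\nl F$.} Here $F$ solves $\Ll F=\fL$ with Neumann data $\gn(\nl F)=\fnu$. I would test the equation against $F$ itself and use the assumption \eqref{eq:lambda} to get coercivity of the bilinear form $\int \nl F\cdot\nl F$, producing, after an integration by parts that moves the boundary term onto $\fnu$, an estimate of the form $\|\nl F\|_{L^2}^2\lesssim \|\fL\|_{(\dH^1)'}\|\nl F\|_{L^2}+\|\fnu\|_{(\dH^{1/2}(\R^2))'}\|\gamma_0 F\|_{\dH^{1/2}(\R^2)}$. Controlling the dual norms requires Sobolev embedding in the half-space: $\dH^1(\R^+\times\R^2)\hookrightarrow L^6$, so $L^{6/5}\hookrightarrow(\dH^1)'$; and for the trace, $\gamma_0$ maps $\dH^1(\R^+\times\R^2)$ into $\dH^{1/2}(\R^2)\hookrightarrow L^4(\R^2)$, so $L^{4/3}(\R^2)\hookrightarrow(\dH^{1/2}(\R^2))'$. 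Absorbing $\|\nl F\|_{L^2}$ gives the stated bound; then integrate in $t$ (this is just $L^1$ in time, no subtlety).

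\textbf{Step 2: the bilinear estimate.} Write $\bar\nabla^\perp\Psi\otimes\nl\Psi$ and estimate it in $L^2(\R^+;\dH^{-1/2}(\R^2))$. By duality against $\dH^{1/2}(\R^2)$ this reduces to controlling, for fixed $z$, products in $L^1(\R^2)$ of one factor of $\bar\nabla\Psi$ and one of $\nl\Psi$, times a test function in $\dH^{1/2}\hookrightarrow L^4(\R^2)$. The key is that the theorem's hypotheses on $\Ll\Psi$, $\nl\Psi$, $\gn(\nl\Psi)$ in $L^2$ yield, via elliptic regularity for $\Ll$ in the half-space (with $L^2$ Neumann data), control of $\nabla\Psi$ in the mixed norms $L^4_z L^{8/3}_x$ and $L^\infty_z L^2_x$ --- exactly the norms appearing in Theorem~\ref{theo:main}'s conclusion. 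Hölder in $x$ ($\tfrac1{8/3}+\tfrac1{8/3}+\tfrac14=1$) then bounds the pairing fiberwise, and Hölder in $z$ ($\tfrac14+\tfrac14+\tfrac12=1$ after pairing with $L^2_z$) closes it. So this step rests on the interior elliptic estimates for $\Ll$, which I would either cite from the later construction or record as a lemma.

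\textbf{Step 3: the equivalences.} $(1)\Rightarrow(2)$: starting from the weak form of \eqref{eq:QG}, choose test functions of the form $\phi=\div(\nl\varphi)=\Ll\varphi$ appearing naturally when one applies $\nl$ (i.e. tests the reformulated equation against $\Phi$ by writing $\Phi=\nl\varphi+\mathrm{curl}$ and using the Hodge decomposition). The transport structure $(\dt+\bar\nabla^\perp\Psi\cdot\bar\nabla)$ commutes with nothing, so the manipulation is: rewrite $(\dt+\bar\nabla^\perp\Psi\cdot\bar\nabla)\Ll\Psi$ tested against $\phi$, integrate by parts in $x,z$ to move $\Ll$ onto $\phi$, getting $\nl\Psi\cdot\nl(\dt\phi+\bar\nabla^\perp\Psi\cdot\bar\nabla\phi)$ plus commutator terms; then recognize that applying $\P_\lambda$ (testing against $\nl$-fields) kills the curl part and the $\beta_0 x_2$ term contributes $\P_\lambda(\Psi e_1)$ after noting $\bar\nabla^\perp\Psi\cdot\bar\nabla(\beta_0 x_2)=\beta_0\partial_1\Psi$ and integrating by parts. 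The boundary equation \eqref{eq:QGtrace} is recovered/used precisely in handling the boundary terms at $z=0$ generated by the $\partial_z(\lambda\partial_z\cdot)$ part of $\Ll$ --- this is where $\gn(\nl\Psi)$ and its advection enter, and where $\gamma_0(\bar\nabla^\perp\Psi)$ in \eqref{eq:QGtrace} matches the boundary trace of the velocity. $(2)\Leftrightarrow(3)$ is essentially a restatement: $(3)$ is $(2)$ with the test function separated as $\phi(t,z,x)=\phi_1(t)\Phi_2(z,x)$ and the spatial nonlinear term interpreted via the duality pairings $\langle\cdot,\cdot\rangle_{H^{3/2},H^{-3/2}}$ and $\langle\cdot,\cdot\rangle_{\dH^1,\dH^{-1}}$ rather than as integrals; that these pairings make sense is exactly what Steps~1--2 guarantee, since $\P_\lambda\bar\div(\bar\nabla^\perp\Psi\otimes\nabla\Psi)\in L^\infty_t L^2_z H^{-3/2}_x$ (one more derivative than the $\dH^{-1/2}$ bound) and $\P_\lambda(\Psi e_1)\in \dH^1$ follows from $\nabla\Psi\in L^2$. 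Finally, the $\dt\Psi$ estimate is read off from $(3)$: the equation expresses $\dt\nl\Psi$ as a sum of terms bounded in $L^\infty_t L^2_z H^{-3/2}_x$ (nonlinear term, via Step~2 plus one derivative), $\dH^1\subset$ that space ($\beta$ term), and $\nl F\in L^1_t L^2$ (forcing), and since $\nl$ is elliptic one recovers $\dt\Psi$ in the same negative-order space with the displayed constant.

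\textbf{Main obstacle.} The delicate point is Step~3's treatment of the boundary terms at $z=0$: making rigorous the claim that \eqref{eq:QGtrace} is precisely the boundary trace of the reformulated bulk equation requires knowing that the relevant traces ($\gn(\nl\Psi)$, $\gamma_0\Psi$, $\gamma_0\bar\nabla\Psi$) exist in the right spaces and that the integration by parts in $z$ is legitimate despite $\nabla\Psi$ only being $L^\infty_t L^2$ in the bulk. This is handled by the Hodge decomposition machinery set up in Section~\ref{section:reformulation} --- writing $\nl\Psi$ via $\P_\lambda$, using that $\Ll\Psi\in L^2$ gives enough regularity for the trace of the vertical component (cf.\ Lemma~\ref{lemm:trace}), and using the mixed-norm bounds $\nabla\Psi\in L^\infty_z L^2_x$ to give a well-defined trace at every $z\ge 0$ including $z=0$. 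Everything else is bookkeeping with Hölder, Sobolev embeddings in the half-space, and the coercivity~\eqref{eq:lambda}.
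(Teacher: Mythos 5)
Your Steps 1 and 2 coincide with the paper's argument: the bound on $\nl F$ is obtained exactly as you say, by Lax--Milgram/duality using $\dH^1(\R^+\times\R^2)\hookrightarrow L^6$ (so $L^{6/5}\hookrightarrow[\dH^1]^*$) and the trace embedding (so $L^{4/3}(\R^2)\hookrightarrow\dH^{-1/2}(\R^2)$); and the bilinear estimate is exactly the chain $L^{4/3}(\R^2)\hookrightarrow\dH^{-1/2}(\R^2)$ plus H\"older plus the mixed-norm bound $\nl\Psi\in L^4(\R^+;L^{8/3}(\R^2))$ supplied by Proposition \ref{prop:apriori}. Your account of $(1)\Leftrightarrow(2)$ also matches the paper's central computation, including the role of the boundary term $\int_{\R^2}(\bar\nabla\phi(0,x)\cdot\gamma_0(\bar\nabla^\perp\Psi))\gn(\nl\Psi)\,dx$ in recovering (\ref{eq:QGtrace}); one point you leave vague is that the ``commutator terms'' you mention vanish identically, by the antisymmetry cancellation $\bar\nabla^\perp\bar\nabla\Psi:\bar\nabla\bar\nabla\Psi+\lambda\,\partial_z\bar\nabla^\perp\Psi\cdot\partial_z\bar\nabla\Psi=0$, which is what makes $\div(\bar\nabla^\perp\Psi\cdot\bar\nabla\nl\Psi)=\bar\nabla^\perp\Psi\cdot\bar\nabla(\Ll\Psi)$ and hence makes the equivalence exact rather than exact-up-to-error. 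The paper also justifies all of these integrations by parts by a tangential regularization $\Psi_\eps=\Psi\ast\bar\eta_\eps$ (convolution in $x$ only, which commutes with $\Ll$ and $\gn$); you correctly identify this as the delicate point but do not name the mechanism.

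The one genuine gap is your claim that $(2)\Leftrightarrow(3)$ ``is essentially a restatement.'' It is not: in $(2)$ the test fields are gradients $\nabla\phi$, whereas in $(3)$ the field $\Phi_2$ is an arbitrary element of $(C^\infty)^3$, so $(3)$ asserts the projected equation as a distributional identity, not merely its pairing against gradients. The direction $(3)\Rightarrow(2)$ is indeed immediate (take $\Phi_2=\nabla\phi$), but $(2)\Rightarrow(3)$ requires showing that the distribution
\begin{equation*}
D=-\int_0^T\dt\phi_1\,\nl\Psi+\int_0^T\phi_1\,\nl F-\bar{\div}\!\int_0^T\phi_1\,(\bar\nabla^\perp\Psi\otimes\nl\Psi)-\beta\int_0^T\phi_1\,\Psi e_1,
\end{equation*}
which only lives in $L^2(\R^+;H^1(\R^2)+\dH^{-3/2}(\R^2))$ and not in $L^2(\R^+\times\R^2)$, satisfies $\P_\lambda D=0$ once it annihilates all gradients. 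This forces you to extend the Hodge projection beyond $L^2$ to the spaces $L^2(\R^+;H^m(\R^2)+\dH^s(\R^2))$ and to prove that a distribution in such a space orthogonal to all gradients of Schwartz functions has vanishing $\P_\lambda$-projection; this is precisely the content of Proposition \ref{prop:Hodgeextended}, which your outline never invokes and which the $L^2$ Hodge decomposition you do set up does not by itself provide.
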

Let us give the idea why solutions of (\ref{eq:QGreformule}) verify  (\ref{eq:QG}) (\ref{eq:QGtrace}) (\ref{eq:QGinitial}).
Thanks to the boundedness of $\Ll\Psi$, we can define $\gn(\nl\Psi)$ at $z=0$. In the Hodge decomposition, the vertical component 
of  $\P_{\mbox{curl}}$  is equal to 0 at $z=0$. Hence, formally, the vertical component to  $\P_{\lambda}(\bar\nabla\Psi^\perp\cdot\bar\nabla\nabla_\lambda\Psi)$ at $z=0$ is the same as the vertical component of $\bar\nabla\Psi^\perp\cdot\bar\nabla\nabla_\lambda\Psi$. This provides formally Equation (\ref{eq:QGtrace}).  Similarly, $\div(\P_\lambda \cdot )=\div(\cdot)$. Therefore,  because $\partial_i(\bar\nabla\Psi)^\perp\cdot\bar\nabla\partial_i\Psi=0$ for all $i$, taking the divergence of equation (\ref{eq:QGreformule}) gives (\ref{eq:QG}). 
\vskip0.3cm
Note that for a function  $u\in L^2(\R^+\times\R^2)$, in general, we cannot define the trace $u\cdot\nu$ at $z=0$, and so the trace of 
$\P_\lambda u\cdot\nu$ at $z=0$. However, the projection $\P_\lambda$ is continuous in $L^2(\R^2\times \R^+)$. Noticing that 
$\P_\lambda$ commute with $\bar{\nabla}\cdot$ (but not with $\partial_z$), it is enough to have compactness  for  $\bar\nabla\Psi^\perp\otimes \nabla_\lambda\Psi$ to have stability for Equation (\ref{eq:QGreformule}).
This provides stability for solutions of the quasi-geostrophic equation, even without compactness on the trace. 
\vskip0.3cm
It is interesting to compare Equation (\ref{eq:QGreformule}), in the case $\beta=0$ and  $F=0$,  with the Euler equation in the half space with zero flux at $z=0$:
$$
\begin{array}{l}
\dt \mbox{curl} v+\P_{\mbox{curl}}[\div (\mbox{curl} v\otimes \mbox{curl} v)]=0, \qquad (t,x,z)\in \R^+\times\R^2\times\R^+,\\[0.3cm]
\mbox{curl} v_{t=0}=\mbox{curl} v^0,\qquad (x,z)\in \R^+\times\R^2,
\end{array}
$$
where the velocity is given by $u=\mbox{curl} v$. The quasi-geostrophic equation is obtained by flipping  $\nabla_\lambda$ and $\mbox{curl}$, and by stratifying the flow (the advection is $\bar\nabla^\perp\Psi$).  Note that Equation (\ref{eq:QG}) is reminiscent to the vorticity equation for 2D Euler: one is obtained through the $\mbox{curl}$ operator, while the second one is obtained through the $\div$ operator from (\ref{eq:QGreformule}). 
\vskip0.3cm
Similarly to the introduction of the gradient of pressure for the  Euler equation, Equation (\ref{eq:QGreformule}) can be written as
\begin{equation*}\left\{\begin{array}{l}
\partial_t\nabla_\lambda\Psi+\bar\nabla\Psi^\perp\cdot\bar\nabla\nabla_\lambda\Psi=\rm{curl} Q,\quad \mbox{on }\RR^+\times \RR^+\times \RR^2\\[0.15cm]
\nabla_\lambda\Psi_{|_{t=0}}=\nabla_\lambda\Psi_0\mbox{ given }\\[0.15cm]
\mbox{curl}Q_{|_{z=0}}\cdot\nu=0.
\end{array}\right.\end{equation*}
This formulation will not be used in this paper.

\section{Properties of the Hodge decomposition}

\subsection{Preliminaries}
 We recall that for $s>0$,   $H^{s}(\R^+\times\R^2)$ is the space of  restrictions to $\R^+\times\R^2$ of functions of $H^{s}(\R^3)$.
 Moreover, the canonical norm on $H^{s}(\R^+\times\R^2)$ is equivalent to 
 $$
|||u|||_{s}= \inf\{\|v\|_{H^{s}(\R^3)}; \ \ v\in H^{s}(\R^3)  \ \mathrm{extension \ of \ }u  \ \mathrm{in \ } \R^3\}.
 $$
 We have the same property for $\dH^s(\R^+\times\R^2)$, for $s>0$. 

Hence, from interpolation  there exists a constant $C>0$ such that 
\begin{equation}\label{eq:Interpolation}
\|u\|^2_{H^{1/2}(\R^+\times\R^2)}\leq C(\|u\|^2_{L^2(\R^+; H^{1/2}(\R^2))}+\|u\|_{L^2(\R^+\times\R^2))}\|\partial_zu\|_{L^2(\R^+\times\R^2)}).
\end{equation}  
Moreover, extending $u$ on $\R^3$ by $u(z,x)=u(-z,x)$ for $z<0$, and using Poincar\'e inequality in $\R^3$, we find that there exists $C>0$ such that
\begin{equation}\label{eq:Sobolev}
\|u\|_{L^6(\R^+\times\R^2)}\leq C\|\nabla u\|_{L^2(\R^+\times\R^2)}, \qquad \mathrm{whenever \ } u\in H^1(\R^+\times\R^2).
\end{equation}  

\vskip0.3cm
Following Temam \cite{Temam}, we have
\begin{eqnarray*}
&&H=\{u\in L^2(\R^+\times\R^2); \ \div u=0, \ \gn(u)=0\}\\
&&=\left\{ u\in L^2(\R^+\times\R^2); \  \int_0^\infty\!\!\!\int_{\R^2}u\cdot\nabla\phi\,dx\,dz=0; \ \forall \phi\in \dH^1(\R^+\times\R^2)\right\}. 
\end{eqnarray*}
We denote $[\dH^1(\R^+\times\R^2)]^*$ the dual of $\dH^1(\R^+\times\R^2)$. We recall the following trace properties.
 \begin{lemma}\label{lemm:trace}
  \begin{equation}\label{trace1}
  \|\gamma_0 u\|_{\dH^{1/2}(\R^2)} \leq \|\nabla u\|_{L^2(\R^+\times\R^2)},  \qquad \mathrm{for \ } u\in \dH^1(\R^+\times\R^2),
  \end{equation}
  where $\gamma_0 u(x)=u(0,x)$ for $x\in \R^2$, whenever $v$ is smooth.
   \begin{equation}\label{trace2}
   ||\gn(u)||_{\dot {H}^{-1/2}(\R^2)}\leq ||u||_{L^2(\R^+\times \R^2)}+\|\div\ u\|_{[\dH^1(\R^+\times\R^2)]^*},
   \end{equation}
   for any $u\in[ L^2(\R^2)]^3$ with $\div\ u\in [\dH^1(\R^+\times\R^2)]^*$.
  \end{lemma}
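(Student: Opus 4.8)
The plan is to prove the two inequalities separately, in both cases reducing to smooth functions and computing on the Fourier side in the horizontal variable $x\in\R^2$; both arguments in fact give the sharp constant $1$.

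\emph{Proof of \eqref{trace1}.} Since $\dH^1(\R^+\times\R^2)$ is the completion of the functions smooth on $\overline{\R^+}\times\R^2$ with compact support under $\|\nabla\cdot\|_{L^2}$ (legitimate in three dimensions, where $\dH^1\hookrightarrow L^6$ by \eqref{eq:Sobolev}, so there are no constant modes), it suffices to establish the bound for such $u$. Taking the partial Fourier transform $\hat u(z,\xi)$ in $x$, one has $\|\gamma_0 u\|_{\dH^{1/2}(\R^2)}^2=\int_{\R^2}|\xi|\,|\hat u(0,\xi)|^2\,d\xi$. Writing $|\hat u(0,\xi)|^2=-\int_0^\infty\partial_z|\hat u(z,\xi)|^2\,dz\le 2\int_0^\infty|\hat u|\,|\partial_z\hat u|\,dz$ and using $2|\xi|ab\le |\xi|^2a^2+b^2$ pointwise in $(z,\xi)$, one obtains $|\xi|\,|\hat u(0,\xi)|^2\le\int_0^\infty\big(|\xi|^2|\hat u|^2+|\partial_z\hat u|^2\big)\,dz$; integrating in $\xi$ and invoking Plancherel yields $\|\gamma_0 u\|_{\dH^{1/2}(\R^2)}^2\le\|\bar{\nabla} u\|_{L^2}^2+\|\partial_z u\|_{L^2}^2=\|\nabla u\|_{L^2}^2$.

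\emph{Proof of \eqref{trace2}.} Here $\gn(u)$ first has to be given a meaning, and I would do this by duality. Introduce the extension operator $E$ defined by $\widehat{Eg}(z,\xi)=e^{-|\xi|z}\hat g(\xi)$; a direct computation shows $Eg\in\dH^1(\R^+\times\R^2)$, $\gamma_0(Eg)=g$, and $\|\nabla Eg\|_{L^2(\R^+\times\R^2)}=\|g\|_{\dH^{1/2}(\R^2)}$. For $u$ smooth with compact support, Green's formula on $\R^+\times\R^2$ (the outward normal at $z=0$ being $-e_1$, with $e_1$ the vertical direction, so that $\gn(u)=-u^{(1)}(0,\cdot)$) gives, for any $\phi\in\dH^1(\R^+\times\R^2)$,
\[
\int_{\R^2}\gn(u)\,\gamma_0\phi\,dx=\int_0^\infty\!\!\int_{\R^2}\big((\div u)\,\phi+u\cdot\nabla\phi\big)\,dx\,dz .
\]
The right-hand side vanishes whenever $\gamma_0\phi=0$: approximating such $\phi$ in $\dH^1$ by interior test functions and using the definition of the distributional divergence, $\langle\div u,\phi\rangle=-\int u\cdot\nabla\phi$. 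Consequently, for general $u\in[L^2(\R^+\times\R^2)]^3$ with $\div u\in[\dH^1(\R^+\times\R^2)]^*$, the quantity
\[
\langle\gn(u),g\rangle:=\int_0^\infty\!\!\int_{\R^2}\big((\div u)\,Eg+u\cdot\nabla Eg\big)\,dx\,dz
\]
is well defined, independent of the chosen $\dH^1$-extension of $g$, and coincides with the classical trace when $u$ is smooth. Estimating $|\langle\gn(u),g\rangle|\le\big(\|u\|_{L^2}+\|\div u\|_{[\dH^1]^*}\big)\|\nabla Eg\|_{L^2}=\big(\|u\|_{L^2}+\|\div u\|_{[\dH^1]^*}\big)\|g\|_{\dH^{1/2}}$ and taking the supremum over $\|g\|_{\dH^{1/2}(\R^2)}\le1$ gives \eqref{trace2}.

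The genuinely substantive content is light: the two Fourier computations are elementary, and the rest is bookkeeping — checking that the completion description of $\dH^1(\R^+\times\R^2)$ permits the reduction to smooth functions, that the explicit extension $E$ has the stated isometry property, and that $\langle\gn(u),g\rangle$ does not depend on the extension (equivalently, that $\int_{\R^+\times\R^2}((\div u)\phi+u\cdot\nabla\phi)=0$ for every $\phi\in\dH^1(\R^+\times\R^2)$ with $\gamma_0\phi=0$). If any step requires care it is this last independence claim, which relies on the density of interior test functions in $\{\phi\in\dH^1:\gamma_0\phi=0\}$; this is standard and poses no real obstacle.
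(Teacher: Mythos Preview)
Your proof is correct. For \eqref{trace2} your argument is essentially the paper's: both use Green's formula to express the pairing $\langle\gn(u),\gamma_0\phi\rangle$ as a bulk integral, bound it by $(\|u\|_{L^2}+\|\div u\|_{[\dH^1]^*})\|\nabla\phi\|_{L^2}$, and then optimize over $\phi$. You make the optimization explicit via the harmonic extension $\widehat{Eg}(z,\xi)=e^{-|\xi|z}\hat g(\xi)$ and its isometry property, whereas the paper writes the identity $\sup_{\|g\|_{\dH^{1/2}}\le1}=\sup_{\|\nabla v\|_{L^2}\le1}$, which encodes the same fact.

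For \eqref{trace1} the routes differ. The paper argues that the harmonic extension $\tilde u$ of $\gamma_0 u$ minimizes the Dirichlet integral among competitors with the same trace, then computes $\|\nabla\tilde u\|_{L^2}^2=\|\gamma_0 u\|_{\dH^{1/2}}^2$ by integrating by parts and using $\gn(\nabla\tilde u)=\bar\Delta^{1/2}\gamma_0 u$. You instead apply the fundamental theorem of calculus in $z$ and the arithmetic--geometric inequality on the Fourier side. Both are standard and yield the sharp constant $1$; your approach is slightly more self-contained, while the paper's makes the role of the harmonic extension --- which you end up using anyway in \eqref{trace2} --- explicit from the start. One minor point of notation: in your definition of $\langle\gn(u),g\rangle$ the term ``$\int(\div u)\,Eg$'' should strictly be the duality pairing $\langle\div u,Eg\rangle_{[\dH^1]^*,\dH^1}$, since $\div u$ is not assumed to be a function; this does not affect the argument.
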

  \begin{proof}
  - Proof of (\ref{trace1}). 
  Let $u$ be a smooth function. From the Dirichlet minimization problem, 
  $\tilde u $ defined by 
  $$
 \left\{ \begin{array}{l}
  -\Delta \tilde u =0\\
\gamma_0(\tilde u)=\gamma_0 (u),
  \end{array}\right.
  $$ 
  minimizes the the Dirichlet integral among functions with same trace at $z=0$.
Hence
  \begin{eqnarray*}
  &&\int_0^\infty\int_{\R^2}  |\nabla u|^2\,dx\,dz\geq \int _0^\infty\int_{\R^2}|\nabla \tilde u|^2\,dx\,dz\\
&&  \qquad =\int_{\R^2}\tilde u(0,x) \gamma_\nu (\nabla \tilde u)\,dx=||\bar{\Delta}^{\frac{1}{2}} \gamma_0(\tilde u)||^2_{L^2(\R^2)}=||\gamma_0(u)||^2_{\dH^{\frac{1}{2}}(\R^2)}.
  \end{eqnarray*}
  The result can be extended to $u\in \dH(\R^+\times\R^2)$ by density.
  
  - Proof of (\ref{trace2}). Let $u,v$ smooth functions, such that $\nabla v\in L^2(\R^+\times\R^2)$, $u\in L^2(\R^+\times\R^2)$, and $\div  u\in [\dH^1(\R^+\times\R^2)]^*$. From the divergence theorem, we have 
  \begin{eqnarray*}
  &&\qquad \int_0^\infty\int_{\R^2}\nabla v\cdot u\,dx\,dz\\
  &&=- \int_0^\infty\int_{\R^2}v(\div  u)\,dx\,dz+\int_{\R^2}v(0,x)\gn(u)\,dx.
  \end{eqnarray*}
  Hence
  \begin{eqnarray*}
  &&\left|\int_{\R^2}\gamma_0(v)\gn(u)\,dx\right|\leq \|\nabla v\|_{L^2(\R^+\times\R^2)}\\
  &&\qquad\qquad\qquad \left\{\|u\|_{L^2(\R^+\times\R^2)}+\|\div\ u\|_{[\dH^1(\R^+\times\R^2)]^*}\right\}.
  \end{eqnarray*}
  But from (\ref{trace1}):
  \begin{eqnarray*}
  &&\|\gn(u)\|_{\dH^{-1/2}(\R^2)}=\sup\left\{\left|\int_{\R^2}v(x)\gn(u)\,dx\right|; \ \|v\|_{H^{1/2}(\R^2)}\leq 1\right\}\\
  &&\qquad \qquad =\sup\left\{\left|\int_{\R^2}\gamma_0(v)\gn(u)\,dx\right|; \ \|\nabla v\|_{L^2(\R^+\times\R^2)}\leq 1\right\}.
  \end{eqnarray*}
  So 
  $$
  \|\gn(u)\|_{\dH^{-1/2}(\R^2)}\leq \|u\|_{L^2(\R^+\times\R^2)}+\|\div\ u\|_{[\dH^1(\R^+\times\R^2)]^*}.
  $$
  We conclude again by density.
   \end{proof}
  
\subsection{The Hodge decomposition}

We show the following proposition.
\begin{prop}\label{prop:HodgeL2}
For any $u\in L^2(\R^+\times\R^2)$, there exists a unique $\nl \psi\in L^2(\R^+\times\R^2)$ and $\mathrm{curl} \ v\in H$ with
$$
u=\nl\psi +\mathrm{curl} \ v.
$$
The linear operators $\P_\lambda u=\nl\psi$ and $\P_{\mathrm{curl}}u=\mathrm{curl} \ v$ are bounded in $L^2(\R^+\times\R^2)$. Especially, there exists $C>0$ such that 
$$
\|\P_{\lambda}u\|_{L^2(\R^+\times\R^2)}\leq C \|u\|_{L^2(\R^+\times\R^2)},
$$
and for every $u\in L^2(\R^+\times\R^2)$,  and any $\phi\in \dH^1(\R^+\times\R^2)$,
$$
\int_0^\infty\int_{\R^2}\nabla \phi(z,x)u(z,x)\,dx\,dz=\int_0^\infty\int_{\R^2}\nabla \phi(z,x)\P_\lambda u(z,x)\,dx\,dz.
$$
\end{prop}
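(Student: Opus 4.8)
The plan is to realize the decomposition variationally, exactly as for the classical Hodge/Leray decomposition but with the anisotropic gradient $\nl$ replacing $\nabla$. Given $u\in L^2(\R^+\times\R^2)$, I would solve the elliptic problem: find $\psi$ with $\nl\psi\in L^2(\R^+\times\R^2)$ such that
\begin{equation*}
\int_0^\infty\int_{\R^2}\nl\psi\cdot\nl\phi\,dx\,dz=\int_0^\infty\int_{\R^2}u\cdot\nl\phi\,dx\,dz,\qquad\text{for all }\phi\in\dH^1(\R^+\times\R^2).
\end{equation*}
Because of the assumption $1/\Lambda\le\lambda\le\Lambda$ in \eqref{eq:lambda}, the bilinear form $(\phi,\phi')\mapsto\int\nl\phi\cdot\nl\phi'$ is equivalent to $\int\nabla\phi\cdot\nabla\phi'$, hence coercive and continuous on $\dH^1(\R^+\times\R^2)$ (with the norm $\|\nabla\cdot\|_{L^2}$, using that $\dH^1$ is a Hilbert space — here one uses the Sobolev embedding \eqref{eq:Sobolev} to see $\dH^1(\R^+\times\R^2)\hookrightarrow L^6$, so the seminorm is a genuine norm). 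The right-hand side $\phi\mapsto\int u\cdot\nl\phi$ is a bounded linear functional of norm $\le\Lambda\|u\|_{L^2}$. Lax--Milgram then gives a unique $\nl\psi\in L^2$ with $\|\nl\psi\|_{L^2}\le C\|u\|_{L^2}$; this defines $\P_\lambda u:=\nl\psi$, which is linear and bounded, and the uniqueness of $\nl\psi$ (as opposed to $\psi$) follows since two solutions differ by a function with $\int\nl(\psi_1-\psi_2)\cdot\nl\phi=0$ for all $\phi$, testing against $\phi=\psi_1-\psi_2$.

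Next I would set $\mathrm{curl}\,v:=u-\nl\psi$ and check $\mathrm{curl}\,v\in H$. By construction, $\int(u-\nl\psi)\cdot\nabla\phi\,dx\,dz=0$ for every $\phi\in\dH^1(\R^+\times\R^2)$ — note one can test with $\nabla\phi$ or $\nl\phi$ interchangeably, since as $\phi$ ranges over $\dH^1$ so does the relation hold, after absorbing $\lambda$ into a change of variable in the test function (more precisely: if $\int(u-\nl\psi)\cdot\nl\phi=0$ for all $\phi$, apply it with $\phi$ replaced by a solution of $\partial_z(\lambda^{-1}\partial_z\tilde\phi)+\bar\Delta\tilde\phi=\ldots$; alternatively, and more cleanly, redo the Lax--Milgram step testing directly against $\nabla\phi$, which also works since $\phi\mapsto\int u\cdot\nabla\phi$ is bounded and $\int\nl\psi\cdot\nabla\phi$ is coercive in $\phi$ up to the constants from \eqref{eq:lambda}). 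Either way one arrives at $\int\mathrm{curl}\,v\cdot\nabla\phi\,dx\,dz=0$ for all $\phi\in\dH^1$, which is precisely the second characterization of $H$ recalled after Lemma~\ref{lemm:trace}; in particular $\div\mathrm{curl}\,v=0$ in $\mathcal D'$ and, by \eqref{trace2}, $\gn(\mathrm{curl}\,v)=0$ in $\dH^{-1/2}(\R^2)$. Boundedness of $\P_{\mathrm{curl}}=I-\P_\lambda$ is then immediate, and the commutation identity $\int\nabla\phi\cdot u=\int\nabla\phi\cdot\P_\lambda u$ is exactly the defining variational equation rewritten (using $\int\nabla\phi\cdot\mathrm{curl}\,v=0$).

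Finally, uniqueness of the full decomposition: if $\nl\psi_1+\mathrm{curl}\,v_1=\nl\psi_2+\mathrm{curl}\,v_2$ with both $\mathrm{curl}\,v_i\in H$, then $\nl(\psi_1-\psi_2)=\mathrm{curl}\,v_2-\mathrm{curl}\,v_1\in H$, so it is $L^2$-orthogonal to itself — pairing $\nl(\psi_1-\psi_2)$ against $\nabla(\psi_1-\psi_2)$ and using that the difference lies in $H$ (membership in $H$ means orthogonality to all gradients of $\dH^1$ functions, and $\psi_1-\psi_2\in\dH^1$ since its $\nl$-gradient, hence its $\nabla$-gradient, is in $L^2$) forces $\nl(\psi_1-\psi_2)=0$. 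I expect the only genuinely delicate point to be the functional-analytic setup of $\dH^1(\R^+\times\R^2)$ as a Hilbert space on which $\|\nabla\cdot\|_{L^2}$ is a norm and for which the trace statements of Lemma~\ref{lemm:trace} and the density arguments are available; once that is in place, Lax--Milgram does all the work, and the interplay between $\nl$ and $\nabla$ is handled purely by the two-sided bound \eqref{eq:lambda}. Everything else — linearity, the operator norm estimate, the commutation with $\bar\nabla\cdot$ mentioned later — is a direct reading-off of the variational identity.
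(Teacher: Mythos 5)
Your argument is correct, but it is organized differently from the paper's. The paper builds $\psi=\psi_1+\psi_2$ via \emph{two} successive Lax--Milgram problems --- a Dirichlet problem $\Ll\psi_1=\div u$, $\gamma_0(\psi_1)=0$, followed by a Neumann problem $\Ll\psi_2=0$, $\gn(\nl\psi_2)=\gn(u-\nl\psi_1)$ --- which forces it to invoke the trace estimate (\ref{trace2}) already at the construction stage, to make sense of $\gn(u-\nl\psi_1)\in\dH^{-1/2}(\R^2)$. You instead solve the single variational problem $\int\nl\psi\cdot\nabla\phi\,dx\,dz=\int u\cdot\nabla\phi\,dx\,dz$ for all $\phi\in\dH^1(\R^+\times\R^2)$, whose bilinear form is coercive and bounded thanks to (\ref{eq:lambda}); this is the weak form of the full Neumann problem $\Ll\psi=\div u$, $\gn(\nl\psi)=\gn(u)$, it produces the same $\nl\psi$ with the same operator bound, and it gives $u-\nl\psi\in H$ directly from the characterization of $H$ as the orthogonal complement of gradients, with no trace lemma needed at this point. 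That is a genuine, if modest, simplification. One caution: your \emph{first} formulation, with $\nl\phi$ rather than $\nabla\phi$ as the test gradient, is not equivalent --- the set $\{\nl\phi:\phi\in\dH^1\}$ does not coincide with the set of gradients unless $\lambda\equiv1$ (matching horizontal components forces a candidate $\tilde\phi$ with $\nabla\tilde\phi=\nl\phi$ to differ from $\phi$ by a function of $z$ alone, which is incompatible with $\partial_z\tilde\phi=\lambda\partial_z\phi$ for general $\phi$), so the ``change of variable in the test function'' remark does not repair it; only your second option, running Lax--Milgram with the mixed form $\int\nl\psi\cdot\nabla\phi$, yields membership in $H$. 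Since you state that option explicitly, the proof goes through, and the uniqueness argument and the identity $\int\nabla\phi\cdot u=\int\nabla\phi\cdot\P_\lambda u$ are then handled exactly as in the paper.
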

\begin{proof}
We construct $\psi$ as $\psi_1+\psi_2$, where $\psi_1\in \dH^1(\R^+\times\R^2)$ is the solution to the elliptic equation with Dirichlet boundary condition:
$$
\Ll \psi_1=\div \ u,\  \mathrm{in } \ \R^+\times \R^2,\qquad \gamma_0(\psi_1)=0,
$$
and $\psi_2\in \dH^1(\R^+\times\R^2)$ is solution to the elliptic equation with Neumann boundary condition:
$$
\Ll \psi_2=0,\  \mathrm{in } \ \R^+\times \R^2,\qquad \gn(\nl \psi_2)=\gn(\tilde{u}),
$$

where $\tilde{u}=u-\nl \psi_1$. Note that $\div \ \tilde{u}=0$, so from Lemma (\ref{trace2}), $\gn(\tilde u)$ is well defined in $H^{-1/2}(\R^2)$.
\vskip0.3cm
Namely, for every $\phi\in \dH^1(\R^+\times\R^2)$ with $\gamma_0(\phi)=0$
\begin{eqnarray*}
&& \int_0^\infty\int_{\R^2} \nl \psi_1\nabla\phi\,dx\,dz= \int_0^\infty\int_{\R^2}u\nabla\phi\,dx\,dz.
\end{eqnarray*}
And for every $\phi\in \dH^1(\R^+\times\R^2)$
\begin{eqnarray*}
&& \int_0^\infty\int_{\R^2} \nl \psi_2\nabla\phi\,dx\,dz= -\left[   _{H^{-1/2}(\R^2)}\langle \gn(\tilde{u}); \gamma_0(\phi)\rangle_{H^{1/2}(\R^2)}\right].
\end{eqnarray*}
The existence and  uniqueness of $\psi_1$ and $\psi_2$ are obtained thanks to the Lax-Milgram theorem. 
Taking $\phi=\psi_1$, we get 
$$
\|\nabla_{\sqrt{\lambda}}\psi_1\|^2_{L^2(\R^+\times\R^2)}\leq \|\nabla\psi_1\|_{L^2(\R^+\times\R^2)}\|u\|_{L^2(\R^+\times\R^2)}.
$$
Since $(1/\Lambda)|\nabla\psi_1|\leq |\nabla_{\sqrt{\lambda}}\psi_1|\leq \Lambda|\nabla\psi_1|$, we have
$$
\|\nabla\psi_1\|_{L^2(\R^+\times\R^2)}\leq \Lambda \|u\|_{L^2(\R^+\times\R^2)}.
$$
Taking $\phi=\psi_2$, and using the trace theorems,
\begin{eqnarray*}
&&\|\nabla_{\sqrt{\lambda}}\psi_2\|^2_{L^2(\R^+\times\R^2)}\leq \|\gn(\tilde{u})\|_{\dH^{-1/2}(\R^2)}\|\gamma_0(\psi_2)\|_{\dH^{1/2}(\R^2)}\\
&&\qquad\leq  \|\tilde{u}\|_{L^2(\R^+\times\R^2)}\|\nabla\psi_2\|_{L^2(\R^+\times\R^2)}.
\end{eqnarray*}
Hence
\begin{eqnarray*}
&&\|\nabla\psi_2\|_{L^2(\R^+\times\R^2)}\leq \Lambda \|\tilde{u}\|_{L^2(\R^+\times\R^2)}\\
&&\qquad \leq \Lambda (\|u\|_{L^2(\R^+\times\R^2)}+(\|\nabla\psi_1\|_{L^2(\R^+\times\R^2)})\\
&&\qquad \leq \Lambda(1+\Lambda) \|u\|_{L^2(\R^+\times\R^2)}.
\end{eqnarray*}
 From the construction, $u-\nl\psi$ is divergence free, so it can be written as $\mathrm{curl}\ v$. Moreover,  by construction 
  $\gn(\mathrm{curl}\ v)=0$, so $\mathrm{curl}\ v\in H$, and
  \begin{eqnarray*}
&&  \|\mathrm{curl}\ v\|_{L^2(\R^+\times\R^2)}\leq \|u\|_{L^2(\R^+\times\R^2)} +\|\nl\psi\|_{L^2(\R^+\times\R^2)}\\
&&\qquad \qquad \leq C \|u\|_{L^2(\R^+\times\R^2)} .
  \end{eqnarray*}
If there exists $\psi\in \dH^1(\R^+\times\R^2)$ and $\mathrm{curl} \  v\in H$, such that 
$$
\nl\psi+\mathrm{curl} \  v=0,
$$
then, since $\int \mathrm{curl} \  v \cdot\nabla \psi=0$,
$$
\int_0^\infty\int_{\R^2} |\nabla_{\sqrt{\lambda}}\psi|^2\,dx\,dz=0.
$$
So $\nl\psi=\mathrm{curl} \  v=0$, and the decomposition is unique. Hence $\P_\lambda$ and $\P_{\mathrm{curl} }$ are well defined as bounded operator in $L^2(\R^+\times\R^2)$.
\vskip0.3cm
For any $\nabla\phi\in L^2(\R^+\times\R^2)$
$$
\int_{0}^\infty\int_{\R^2} u \cdot\nabla\phi\,dx\,dz=\int_{0}^\infty\int_{\R^2} \P_{\mathrm{curl}}u\cdot\nabla\phi\,dx\,dz+\int_{0}^\infty\int_{\R^2} \P_{\lambda}u\cdot\nabla\phi\,dx\,dz.
$$
But $\P_{\mathrm{curl}}u\in H$, so 
$$
\int_{0}^\infty\int_{\R^2} \P_{\mathrm{curl}}u\cdot\nabla\phi\,dx\,dz=0.
$$
\end{proof}
We denote $\mathcal{S}(\R^2)$ the Schwartz class of $C^\infty$ functions with fast decay at infinity, and $\mathcal{S}'(\R^2)$ the dual space as set of  distributions.  
We now extend the operator $\P_\lambda$ to $L^2(\R^+;H^{m}(\R^2)+\dH^{s}(\R^2))$  into itself, for $m\in \R$, and $s\in \R$ with $s\leq1$.
\begin{prop}\label{prop:Hodgeextended}
For every $m\in\R$, and $s< 1$, we can extend the operator  $\P_\lambda$ from  $L^2(\R^+;H^{m}(\R^2)+\dH^{s}(\R^2))$ into itself, and 
$$
\|\P_\lambda u\|_{L^2(\R^+;H^{m}(\R^2)+\dH^{s}(\R^2))}\leq C \|u\|_{L^2(\R^+;H^{m}(\R^2)+\dH^{s}(\R^2))}.
$$
Moreover, if $u\in \mathcal{S}'(\R^+\times \R^2)$, $u\in L^2(\R^+;H^{-m}(\R^2)+\dH^{1}(\R^2))$ for $m>0$, and  $\phi\in \mathcal{S}(\R^+\times\R^2)$,
$$
\int_0^\infty     \mbox{}_ {\mathcal{S}'(\R^2)}\langle u(z);\nabla\phi(z) \rangle_{\mathcal{S}(\R^2)}\,dz=0,
$$
then 
$$
\P_\lambda u=0.
$$
\end{prop}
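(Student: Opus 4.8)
The plan is to build the extension of $\P_\lambda$ by duality and density from the $L^2$-version in Proposition \ref{prop:HodgeL2}, handling the two scales $H^m(\R^2)$ and $\dH^s(\R^2)$ separately and then combining. First I would observe that, since $\lambda$ depends only on $z$ and $\P_\lambda$ is built from the elliptic operators $\Ll$ with Dirichlet and Neumann data (which only involve $\bar\Delta$ in the horizontal variables plus $\partial_z(\lambda\partial_z\cdot)$), the operator $\P_\lambda$ commutes with any Fourier multiplier in $x$, in particular with $(\bar\Delta)^{m/2}$ and with $(1-\bar\Delta)^{m/2}$. Hence for $u\in L^2(\R^+;H^m(\R^2))$ one simply sets $\P_\lambda u := (1-\bar\Delta)^{-m/2}\,\P_\lambda\big((1-\bar\Delta)^{m/2}u\big)$; this is well-defined because $(1-\bar\Delta)^{m/2}u\in L^2(\R^+\times\R^2)$, and the boundedness constant is exactly the $L^2$ one from Proposition \ref{prop:HodgeL2}. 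The same construction with $(\bar\Delta)^{s/2}$ works on $L^2(\R^+;\dH^s(\R^2))$ for $s\le 1$; here one should check that $(\bar\Delta)^{s/2}u\in L^2$ makes sense as a tempered distribution (true since $s\le 1$ keeps us in the range where $\dH^s(\R^2)$ embeds reasonably) and that $\P_\lambda$ of an element of $H=\{$divergence-free, zero flux$\}$ stays divergence-free after applying the horizontal multiplier, so the decomposition is preserved.

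For the sum space $L^2(\R^+;H^m(\R^2)+\dH^s(\R^2))$, I would define $\P_\lambda$ on a decomposition $u=u_1+u_2$ with $u_1\in L^2(\R^+;H^m)$, $u_2\in L^2(\R^+;\dH^s)$ by linearity, $\P_\lambda u=\P_\lambda u_1+\P_\lambda u_2$. The point requiring care is well-definedness: if $u_1+u_2=u_1'+u_2'$ are two such splittings, then $w:=u_1-u_1'=u_2'-u_2$ lies in $L^2(\R^+;H^m\cap\dH^s)$, and one needs $\P_\lambda w$ computed the two ways to agree. Since on the intersection space both formulas reduce (via the commuting multipliers and density of Schwartz functions in $H^m\cap\dH^s$, which is dense in each) to the genuine $L^2$ operator applied to the same function, they coincide; the norm estimate then follows by taking the infimum over splittings. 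This is the step I expect to be the main technical obstacle, because one has to be slightly attentive about which horizontal multipliers are invertible on which space and about the fact that $H^m\cap\dH^s$ is dense in both $H^m$ and $\dH^s$ in the relevant topologies — one wants to reduce every identity to the already-established $L^2$ statement rather than re-prove anything about the elliptic problems.

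Finally, for the characterization: given $u\in L^2(\R^+;H^{-m}(\R^2)+\dH^1(\R^2))$ with $m>0$ such that $\int_0^\infty\langle u(z);\nabla\phi(z)\rangle\,dz=0$ for all $\phi\in\mathcal S(\R^+\times\R^2)$, I would test against $\phi$ of the form $\nabla_\lambda$-potentials: the hypothesis says $u$ is ``$L_\lambda$-divergence-free against Schwartz test gradients,'' hence against all of $\dH^1$ by density, so $\P_\lambda u$, which by construction is the $\nabla_\lambda\psi$-part and is characterized by $\int \nabla\phi\cdot(u-\nabla_\lambda\psi)=0$ versus $\int\nabla\phi\cdot u$, must vanish. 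Concretely, approximate $u$ by $u^\delta\in L^2(\R^+\times\R^2)$ (mollify in $x$ and truncate), for which Proposition \ref{prop:HodgeL2} gives $\int\nabla\phi\cdot\P_\lambda u^\delta=\int\nabla\phi\cdot u^\delta\to 0$; pass to the limit using continuity of $\P_\lambda$ on the sum space to get $\P_\lambda u=0$ as a distribution. The only subtlety is that the test-function identity is stated with $\mathcal S(\R^2)$-valued pairings in $z$, so one should note Schwartz functions $\phi(z,x)=\eta(z)\chi(x)$ with $\eta\in C_c^\infty(\R^+)$, $\chi\in\mathcal S(\R^2)$ are enough, and these separate points in the relevant dual, closing the argument.
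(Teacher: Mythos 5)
For the first statement (the extension of $\P_\lambda$ to the sum space), your argument is essentially the paper's: both define $\P_\lambda u_1=(\bar\Delta-1)^{-m/2}\P_\lambda(\bar\Delta-1)^{m/2}u_1$ and $\P_\lambda u_2=\bar\Delta^{-s/2}\P_\lambda\bar\Delta^{s/2}u_2$ using commutation with horizontal Fourier multipliers; you add the well-definedness check on the intersection space, which the paper omits but which is a genuine (and easy) point.

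For the second statement there is a gap in your approximation step. You propose to mollify $u$ in $x$ and truncate to produce $u^\delta\in L^2(\R^+\times\R^2)$ still satisfying $\int\langle u^\delta;\nabla\phi\rangle\,dz=0$, then apply Proposition \ref{prop:HodgeL2}. But convolution in $x$ with a Schwartz kernel is a bounded multiplier equal to $1$ at $\xi=0$, so it does nothing to the low-frequency singularity of the $\dH^1(\R^2)$ component of $u$: an element of $\dH^1(\R^2)$ with $\hat u\sim|\xi|^{-2+\eps}$ near the origin stays outside $L^2$ after mollification. Truncating in physical space would fix integrability but destroys the hypothesis $\int\langle u;\nabla\phi\rangle=0$ (the commutator with the cutoff produces an error term you do not control). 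Moreover, even granting the approximation, your limit only yields $\langle\P_\lambda u,\nabla\phi\rangle=0$ for Schwartz $\phi$; deducing $\P_\lambda u=0$ from this is exactly the nontrivial point, since $\P_\lambda u$ lives in a sum space where gradients of Schwartz functions are not obviously a norming set, and your closing sentence ("these separate points in the relevant dual") asserts rather than proves this.

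The paper avoids both problems by working with the potential rather than with $u$: it writes $u=\bar\Delta^{-1/2}(1-\bar\Delta^{(m+1)/2})g$ with $g\in L^2(\R^+\times\R^2)$, sets $\P_\lambda g=\nl\Psi$, approximates $\nabla\Psi$ in $L^2$ by $\nabla\Psi_n$ with $\Psi_n$ Schwartz, and tests the hypothesis against the \emph{conjugated} test functions $\Phi_n=\bar\Delta^{1/2}(1-\bar\Delta^{-(m+1)/2})\Psi_n\in\mathcal S$. The multiplier cancels, giving $0=\int\nabla\Psi_n\cdot g=\int\nabla\Psi_n\cdot\nl\Psi\to\int|\nabla_{\sqrt\lambda}\Psi|^2$, hence $\nl\Psi=0$ and $\P_\lambda u=0$. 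If you want to salvage your route, you should replace "mollify and truncate $u$" by this choice of adapted test functions; the self-pairing $\int\nabla\Psi_n\cdot\nl\Psi$ is what actually closes the argument.
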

\begin{proof}
Note that for a smooth function $u\in \mathcal{S}(\R^+\times\R^2)$, 
$$
\P_\lambda\bar{\Delta}^{m/2}=\bar{\Delta}^{m/2}\P_\lambda,\qquad \P_\lambda(\bar{\Delta}-1)^{m/2}=(\bar{\Delta}-1)^{m/2}\P_\lambda.
$$
So for $u=u_1+u_2$ with $u_1\in L^2(\R^+;H^{m}(\R^2))$ and $u_2\in L^2(\R^+;\dH^{s}(\R^2))$, we define $\P_\lambda u$ as
$$
\P_\lambda u=(\bar{\Delta}-1)^{-m/2}\P_\lambda(\bar{\Delta}-1)^{m/2}u_1+\bar{\Delta}^{-s/2}\P_\lambda\bar{\Delta}^{s/2}u_2.
$$
Now, if $u\in L^2(\R^+;H^{-m}(\R^2)+\dH^{1}(\R^2))$ for $m>0$, then 

$$
u=\bar{\Delta}^{-1/2}(1-\bar{\Delta}^{\frac{m+1}{2}}) g,
$$
with $g\in L^2(\R^+\times\R^2)$. Then from Proposition \ref{prop:HodgeL2}, There exists $\Psi\in \dH^1(\R^+\times\R^2)$ such that
$$
\P_{\lambda}g=\nl\Psi\in L^2(\R^+\times\R^2).
$$
Consider a sequence $\Psi_n\in \mathcal{S}(\R^+\times\R^2)$ such that $\nabla\Psi_n$ converges to $\nabla \Psi$ in $L^2(\R^+\times\R^2)$ when $n$ converges to infinity, and take 
$$
\Phi_n=\bar{\Delta}^{1/2}(1-\bar{\Delta}^{-\frac{m+1}{2}})\Psi_n\in \mathcal{S}(\R^+\times\R^2).
$$
We have 
\begin{eqnarray*}
&&0=\int_0^\infty     \mbox{}_ {\mathcal{S}'(\R^2)}\langle u(z);\nabla\Phi_n(z) \rangle_{\mathcal{S}(\R^2)}\,dz\\
&&\qquad =\int_0^\infty\int_{\R^2}\nabla\Psi_n\cdot g\,dx\,dz\\
&&\qquad =\int_0^\infty\int_{\R^2}\nabla\Psi_n\cdot \nl\Psi\,dx\,dz,
\end{eqnarray*}
thanks to Proposition \ref{prop:HodgeL2}. this term converges to 
$$
\int_0^\infty\int_{\R^2}\nabla\Psi\cdot \nl\Psi\,dx\,dz=\int_0^\infty\int_{\R^2}|\nabla_{\sqrt{\lambda}}\Psi|^2\,dx\,dz.
$$
This term is then equal to zero, and $\nabla_{\sqrt{\lambda}}\Psi=\nabla\Psi=0$.
So 
$$
\P_\lambda u=\bar{\Delta}^{-1/2}(1-\bar{\Delta}^{\frac{m+1}{2}})\P_{\lambda}g=0.
$$
\end{proof}

\section{Proof of  the equivalence  Theorem \ref{theo:equivalence}}

We begin with the following proposition.
\begin{prop}\label{prop:apriori}
Consider $\nl\Psi\in L^2(\R^+\times\R^2)$ with $\Ll\Psi \in L^2(\R^+\times\R^2)$, and $\gn(\nl\Psi)\in L^2(\R^2)$.
Then

1-
\begin{eqnarray*}
&&\|\nl \Psi\|_{L^2(\R^+;H^{1/2}(\R^2))}+\|\gamma_0(\nl \Psi)\|_{L^2(\R^2)}\\
&&\qquad\qquad+\|\nl \Psi\|_{C^0(\R^+;L^2(\R^2))}\\
&& \leq C\left(\|\gn(\nl\Psi)\|_{L^2(\R^2)}+\|\Ll\Psi\|_{L^2(\R^+\times\R^2)}+\|\nl\Psi\|_{L^2(\R^+\times\R^2)}\right).
\end{eqnarray*}

2- By Sobolev embedding, we get
\begin{eqnarray*}
&&\|\nl \Psi\|_{L^3(\R^+\times \R^2)}+\|\nl \Psi\|_{L^4(\R^+;L^{8/3}(\R^2))}\\
&& \leq C\left(\|\gn(\nl\Psi)\|_{L^2(\R^2)}+\|\Ll\Psi\|_{L^2(\R^+\times\R^2)}+\|\nl\Psi\|_{L^2(\R^+\times\R^2)}\right).
\end{eqnarray*}

\end{prop}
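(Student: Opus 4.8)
The plan is to exploit that $\lambda$ depends on $z$ only: after a Fourier transform in the horizontal variable the elliptic operator $\Ll$ decouples into a one–parameter family of ordinary differential operators in $z\in\R^+$, and the proposition reduces to uniform–in–frequency bounds for these ODEs. Denote by $\widehat{\,\cdot\,}$ the Fourier transform in $x$, by $\xi\in\R^2$ the dual variable, and set $g:=\Ll\Psi$, $h:=\gn(\nl\Psi)$. Since $\widehat{\nl\Psi}(z,\xi)=\big(\lambda(z)\partial_z\widehat\Psi,\,i\xi\,\widehat\Psi\big)$ and $\widehat{\Ll\Psi}=\partial_z(\lambda\,\partial_z\widehat\Psi)-|\xi|^2\widehat\Psi$, for a.e.\ $\xi$ the profile $\widehat\Psi(\cdot,\xi)$ solves
\begin{equation*}
\partial_z\big(\lambda\,\partial_z\widehat\Psi\big)-|\xi|^2\widehat\Psi=\widehat g(\cdot,\xi)\quad\text{on }\R^+,\qquad (\lambda\,\partial_z\widehat\Psi)(0,\xi)=-\widehat h(\xi).
\end{equation*}
From $\nl\Psi\in L^2(\R^+\times\R^2)$ and $\Ll\Psi\in L^2(\R^+\times\R^2)$ one reads off, via Fubini, that for a.e.\ $\xi$ the functions $y_1:=\lambda\,\partial_z\widehat\Psi$ and (for $\xi\neq0$) $y_2:=|\xi|\widehat\Psi$ belong to $H^1(\R^+)$ in $z$; in particular they have continuous representatives vanishing at $+\infty$, $|\widehat{\nl\Psi}(z,\xi)|^2=|y_1(z,\xi)|^2+|y_2(z,\xi)|^2=:e(z,\xi)$, and $y_1(0,\xi)=-\widehat h(\xi)$. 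This legitimizes directly the integrations by parts below and the trace at $z=0$, with no regularization of $\Psi$; note that only $\lambda\in L^\infty$ bounded away from $0$ is used, and $\lambda$ is never differentiated.

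Next I would establish two ODE estimates. The plain energy estimate — multiply the equation by $\overline{\widehat\Psi}$, integrate in $z$, integrate by parts once, insert the boundary condition, take real parts — gives
\begin{equation*}
E(\xi):=\int_0^\infty\lambda|\partial_z\widehat\Psi|^2\,dz+|\xi|^2\int_0^\infty|\widehat\Psi|^2\,dz=\mathrm{Re}\Big(\widehat h(\xi)\,\overline{\widehat\Psi(0,\xi)}-\int_0^\infty\widehat g\,\overline{\widehat\Psi}\,dz\Big).
\end{equation*}
With the one–dimensional trace inequality $|\widehat\Psi(0,\xi)|^2\le 2\|\partial_z\widehat\Psi\|_{L^2(\R^+)}\|\widehat\Psi\|_{L^2(\R^+)}$ (which, writing the last norm as $|\xi|^{-1}$ times $(|\xi|^2\!\int|\widehat\Psi|^2)^{1/2}$, costs a factor $|\xi|^{-1/2}$) and Young's inequality, this produces the pointwise bound $E(\xi)\lesssim |\xi|^{-1}|\widehat h(\xi)|^2+|\xi|^{-2}\|\widehat g(\cdot,\xi)\|_{L^2(\R^+)}^2$ for $\xi\neq0$. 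A second, pointwise–in–$z$ estimate for $e$ — from $|y_i(z)|^2=|y_i(0)|^2+2\int_0^z\mathrm{Re}(y_i'\overline{y_i})$, the identities $y_1'=\widehat g+|\xi|y_2$, $y_2'=(|\xi|/\lambda)y_1$, and the bounds $\|y_1\|_{L^2(\R^+)}^2+\|y_2\|_{L^2(\R^+)}^2\lesssim E(\xi)$, $|y_1(0)|^2=|\widehat h(\xi)|^2$, $|y_2(0)|^2\lesssim|\xi|E(\xi)$ — yields $\sup_{z\ge0}e(z,\xi)\lesssim |\widehat h(\xi)|^2+\|\widehat g(\cdot,\xi)\|_{L^2(\R^+)}^2+(1+|\xi|)E(\xi)$.

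It then remains to integrate in $\xi$ and apply Plancherel. The weights $|\xi|^{-1},|\xi|^{-2}$ in the bound for $E(\xi)$ are not $\xi$–integrable near the origin in $\R^2$, so I would split $\{|\xi|\le1\}$ from $\{|\xi|\ge1\}$: on the former I use only $\int_{|\xi|\le1}(1+|\xi|)E(\xi)\,d\xi\lesssim\int_{\R^2}E(\xi)\,d\xi\lesssim\|\nl\Psi\|_{L^2(\R^+\times\R^2)}^2$ — this is where the term $\|\nl\Psi\|_{L^2}$ on the right-hand side enters — while on the latter $|\xi|\cdot|\xi|^{-1}=1$ and $|\xi|\cdot|\xi|^{-2}\le1$ give $\int_{|\xi|\ge1}|\xi|E(\xi)\,d\xi\lesssim\|h\|_{L^2(\R^2)}^2+\|g\|_{L^2(\R^+\times\R^2)}^2$. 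Since $\|\nl\Psi\|_{L^2(\R^+;H^{1/2}(\R^2))}^2\approx\int_{\R^2}(1+|\xi|)\big(\int_0^\infty e(z,\xi)\,dz\big)\,d\xi\approx\int_{\R^2}(1+|\xi|)E(\xi)\,d\xi$, this proves the $L^2(\R^+;H^{1/2})$ bound of (1); feeding it into the $\sup_z e$ estimate and integrating in $\xi$ bounds $\|\nl\Psi\|_{C^0(\R^+;L^2(\R^2))}$, and the value $z=0$, where $e(0,\xi)=|\widehat h(\xi)|^2+|\xi|^2|\widehat\Psi(0,\xi)|^2$, bounds $\|\gamma_0(\nl\Psi)\|_{L^2(\R^2)}$. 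Continuity of $z\mapsto\nl\Psi(z,\cdot)$ in $L^2(\R^2)$ follows from dominated convergence, each $e(\cdot,\xi)$ being continuous and dominated by the $\xi$–integrable function $\sup_z e(\cdot,\xi)$, together with the evident weak continuity. The main obstacle is this low/high–frequency bookkeeping: one must track the exact powers of $|\xi|$ so that the pieces reassemble into exactly the three norms on the right-hand side, the first two ($\|\Ll\Psi\|_{L^2}$ and $\|\gn(\nl\Psi)\|_{L^2}$) controlling the high frequencies and the last ($\|\nl\Psi\|_{L^2}$) the low frequencies.

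For Part 2, the Sobolev embedding $H^{1/2}(\R^2)\hookrightarrow L^4(\R^2)$ turns the $L^2(\R^+;H^{1/2})$ bound of Part 1 into $\nl\Psi\in L^2(\R^+;L^4(\R^2))$, and Part 1 also gives $\nl\Psi\in L^\infty(\R^+;L^2(\R^2))$. Interpolating in $x$, pointwise in $z$, $\|f(z,\cdot)\|_{L^3(\R^2)}\le\|f(z,\cdot)\|_{L^2}^{1/3}\|f(z,\cdot)\|_{L^4}^{2/3}$ and $\|f(z,\cdot)\|_{L^{8/3}(\R^2)}\le\|f(z,\cdot)\|_{L^2}^{1/2}\|f(z,\cdot)\|_{L^4}^{1/2}$; integrating in $z$ and using Hölder gives $\|\nl\Psi\|_{L^3(\R^+\times\R^2)}^3\le\|\nl\Psi\|_{L^\infty(\R^+;L^2)}\,\|\nl\Psi\|_{L^2(\R^+;L^4)}^2$ and $\|\nl\Psi\|_{L^4(\R^+;L^{8/3}(\R^2))}^4\le\|\nl\Psi\|_{L^\infty(\R^+;L^2)}^2\,\|\nl\Psi\|_{L^2(\R^+;L^4)}^2$, which together with Part 1 is the stated estimate.
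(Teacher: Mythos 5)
Your proof is correct, but it takes a different route from the paper's. The paper never passes to the ODE level: it decomposes $\Psi=\Psi_1+\Psi_2$, where $\Psi_1$ solves $\Ll\psi_1=\bar{\Delta}^{1/2}(\Ll\Psi)$ with homogeneous Neumann data and $\Psi_2$ solves $\Ll\Psi_2=0$ with $\gn(\nl\Psi_2)=\gn(\nl\Psi)$, obtains $\nl\Psi_1\in L^2(\R^+;\dH^1)$ and $\nabla\Psi_2\in L^2(\R^+;\dH^{1/2})$ by Lax--Milgram applied to the problems conjugated by $\bar{\Delta}^{1/4}$ or $\bar{\Delta}^{1/2}$ (legitimate because $\lambda=\lambda(z)$ makes horizontal Fourier multipliers commute with $\Ll$ and $\gn$), and then recombines the two pieces with the same low/high frequency split you use; the $\gamma_0$ and $C^0(\R^+;L^2)$ bounds come from applying the abstract trace Lemma \ref{lemm:trace} to $\bar{\Delta}^{1/4}\Psi$ at $z=0$ and at each $z>0$. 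You exploit the identical structural fact --- that after a horizontal Fourier transform the problem diagonalizes --- but you push it all the way down to a one-parameter family of ODEs in $z$ and replace both the auxiliary elliptic problems and the trace lemma by the explicit energy identity and the fundamental-theorem-of-calculus bound on $\sup_z e(z,\xi)$. What your version buys is self-containedness and sharper bookkeeping (one sees exactly which powers of $|\xi|$ each datum controls, and the $C^0_zL^2_x$ continuity drops out by dominated convergence rather than by mollification); what the paper's version buys is that the same two auxiliary problems and Lemma \ref{lemm:trace} are reused elsewhere, and that no pointwise-in-$\xi$ identification of traces is needed. The only step of yours that deserves an explicit line is the claim that the distributional Neumann datum satisfies $\widehat{\gn(\nl\Psi)}(\xi)=-y_1(0,\xi)$ for a.e.\ $\xi$: since $y_1(\cdot,\xi)\in H^1(\R^+)$ the pointwise trace exists, and matching it with the weak definition of $\gn$ is a routine computation with test functions $\phi(z)\psi(x)$, Fubini and Plancherel --- not a gap, but worth writing out. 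Part 2 coincides with the paper's argument.
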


\begin{proof}
For every $\phi\in \dH^1(\R^+\times\R^2)$ we have
\begin{eqnarray*}
&&\left|\int_0^\infty\int_{\R^2}\phi \bar{\Delta}^{\frac{1}{2}}(\Ll\Psi)\,dx\,dz\right|\leq \|\bar{\Delta}^{\frac{1}{2}}\phi\|_{L^2}\| \Ll\Psi\|_{L^2}\\
&&\qquad\qquad\leq \|\bar\nabla\phi\|_{L^2}\| \Ll\Psi\|_{L^2}\\
&&\qquad\qquad\leq \|\nabla\phi\|_{L^2}\| \Ll\Psi\|_{L^2}.
\end{eqnarray*}
So $\bar{\Delta}^{\frac{1}{2}}(\Ll\Psi)\in \left( \dH^1(\R^+\times\R^2)\right)^*$. Moreover 
$$
\gn(\nl [\bar{\Delta}^{\frac{1}{4}}\Psi])=\bar{\Delta}^{\frac{1}{4}}\gn(\nl \Psi)\in \dH^{-1/2}(\R^2).
$$
consider $\psi_1\in \dH^1(\R^+\times\R^2)$ solution to 
$$
\Ll\psi_1=\bar{\Delta}^{\frac{1}{2}}(\Ll\Psi), \qquad \gn(\nl\psi_1)=0,
$$
since $\bar{\Delta}^{1/2}$ commutes with $\Ll$ and $\gn$, we have that 
$$
\nl\Psi_1=\bar{\Delta}^{-1/2}\nl\psi_1\in L^2(\R^+;\dH^1(\R^2)),
$$
and 
\begin{eqnarray*}
&&\|\nl\Psi_1\|_{ L^2(\R^+,\dH^1(\R^2))}\leq \sqrt{\Lambda}\|\nabla_{\sqrt{\lambda}}\Psi_1\|_{L^2(\R^+;\dH^{1}(\R^2))}\\
&&\qquad\qquad \leq  \sqrt{\Lambda}\| \Ll\Psi\|_{L^2}.
\end{eqnarray*}
Setting $\Psi_2=\Psi-\Psi_1$, it verifies
$$
\Ll\Psi_2=0,\qquad \gn(\nl\Psi_2)=\gn(\nl\Psi).
$$
The function $\bar{\Delta}^{\frac{1}{2}}\gn(\nl \Psi)\in \dH^{-1/2}(\R^2)$, and $\bar{\Delta}^{\frac{1}{2}}$ commutes with  $\Ll$ and $\gn$,
so $\Psi_2=\bar{\Delta}^{\frac{-1}{2}}\psi_2$ solution to 
$$
\Ll\psi_2=0,\qquad \gn(\nl\psi_2)=\bar{\Delta}^{\frac{1}{2}}\gn(\nl\Psi),
$$
verifying
\begin{eqnarray*}
&&\qquad\qquad \|\nabla\Psi_2\|_{ L^2(\R^+;\dH^{1/2}(\R^2))}\\
&&\leq \sqrt{\Lambda}\|\nabla_{\sqrt{\lambda}}\Psi_2\|_{L^2(\R^+;\dH^{1/2}(\R^2))}\leq \sqrt{\Lambda} \| \gn(\nl\Psi)\|_{L^2(\R^2)}.
\end{eqnarray*}
But, for almost every $z>0$, 
\begin{eqnarray*}
&&\qquad\qquad \|\nl\Psi(z,\cdot)\|^2_{H^{1/2}(\R^2)}\\
&&=\int_{\R^2}|\widehat{\nl\Psi}(z,\xi)|^2(1+|\xi|^2)^{1/2}\,d\xi\\
&&=\int_{|\xi|\leq1}|\widehat{\nl\Psi}(z,\xi)|^2(1+|\xi|^2)^{1/2}\,d\xi\\
&&\qquad+\int_{|\xi|\geq1}|\widehat{\nl\Psi_1}(z,\xi)+\widehat{\nl\Psi_2}(z,\xi)|^2(1+|\xi|^2)^{1/2}\,d\xi\\
&&\leq \sqrt{2}\|\nl\Psi(z)\|^2_{L^2(\R^2)}+2 \sqrt{2}\int_{\R^2}\left(|\xi||\widehat{\nl\Psi_1}(z,\xi)|^2+|\xi|^2|\widehat{\nl\Psi_2}(z,\xi)|^2\right)\,d\xi\\
&&\leq C\left(\|\nl\Psi(z)\|^2_{L^2(\R^2)}+\|\nl\Psi_2(z)\|^2_{\dH^{1}(\R^2)}+\|\nl\Psi_1(z)\|^2_{\dH^{1/2}(\R^2)}\right).
\end{eqnarray*}

Hence
\begin{eqnarray*}
&&\qquad\qquad\|\nl\Psi\|_{L^2(\R^+;H^{1/2}(\R^2))}\\
&&\leq C\left(\|\nl\Psi\|_{L^2(\R^+\times\R^2)}+\|\Ll\Psi\|_{L^2(\R^+\times\R^2)}+ \| \gn(\nl\Psi)\|_{L^2(\R^2)}\right).
\end{eqnarray*}
From Lemma \ref{lemm:trace}, both at $z=0$ and at $z$ for almost every $z>0$, we get
\begin{eqnarray*}
&\|\lambda(z)\partial_z\Psi(z)\|_{L^2(\R^2)}
&=\|\bar{\Delta}^{1/4}(\lambda(z)\partial_z\Psi(z))\|_{\dH^{-1/2}(\R^2)}\\
&&\leq \|\nl\bar{\Delta}^{1/4}\Psi\|_{L^2(\R^+\times\R^2)}+ \|\Ll\bar{\Delta}^{1/4}\Psi\|_{[\dH^1(\R^+\times\R^2)]^*}\\
&&\leq C\left(\|\nl\Psi\|_{L^2(\R^+\times\R^2)}+\|\Ll\Psi\|_{L^2(\R^+\times\R^2)}+ \| \gn(\nl\Psi)\|_{L^2(\R^2)}\right).
\end{eqnarray*}
\vskip0.3cm
Using Lemma \ref{lemm:trace} at $z=0$, and at $z$ for almost every $z>0$, we get
\begin{eqnarray*}
&&\qquad\|\gamma_0(\bar{\nabla}\Psi)\|_{L^2(\R^2)}+\|\bar{\nabla}\Psi\|_{L^\infty(\R^+;L^2(\R^2))}\\
&&\leq\|\gamma_0(\bar{\Delta}^{1/2}\Psi)\|_{L^2(\R^2)}+\|\bar{\Delta}^{1/2}\Psi\|_{L^\infty(\R^+;L^2(\R^2))}\\
&&\leq\|\gamma_0(\bar{\Delta}^{1/4}\Psi)\|_{\dH^{1/2}(\R^2)}+\|\bar{\Delta}^{1/4}\Psi\|_{L^\infty(\R^+;\dH^{1/2}(\R^2))}\\
&&\leq 2\|\nabla \bar{\Delta}^{1/4}\Psi\|_{L^2(\R^+\times\R^2)}\leq 2 \|\nl\Psi\|_{L^2(\R^+;H^{1/2}(\R^2))}\\
&&\leq C\left(\|\nl\Psi\|_{L^2(\R^+\times\R^2)}+\|\Ll\Psi\|_{L^2(\R^+\times\R^2)}+ \| \gn(\nl\Psi)\|_{L^2(\R^2)}\right).
\end{eqnarray*}
\vskip0.3cm
Regularizing $\nl\Psi$ in $x$ and $z$ by convolution, we approximate it by a sequence of function in $C^0(\R^+;L^2(\R^2))$, with convergence in $L^\infty(\R^+;L^2(\R^2))$. So $\nl\Psi \in C^0(\R^+;L^2(\R^2))$.
\vskip0.3cm
From Sobolev imbedding, we have the same control on $\|\nl\Psi\|_{L^2(\R^+;L^4(\R^2))}$. Interpolation between $L^\infty(L^2)$ and 
$L^2(L^4)$ gives the control on the $L^4(L^{8/3})$ and $L^3(L^3)$ norms of $\nl\Psi$.
\end{proof}

We now prove Theorem \ref{theo:equivalence}:
\begin{proof}
Since $L^{4/3}(\R^2)$ is continuously imbedded in $\dH^{-1/2}(\R^2)$, and using  
 Proposition \ref{prop:apriori}, we get 
\begin{eqnarray*}
&&\qquad
\|\bar{\nabla}^\perp\Psi\otimes\nl\Psi\|_{L^\infty(0,T;L^2(\R^+;\dH^{-1/2}(\R^2)))}\\
&&\leq C \|\bar{\nabla}^\perp\Psi\otimes\nl\Psi\|_{L^\infty(0,T;L^2(\R^+;L^{4/3}(\R^2)))}\\
&&\leq  C\|\nl\Psi\|^2_{L^\infty(0,T;L^4(\R^+;L^{8/3}(\R^2)))}\\
&&\leq  C \left(\|\Ll\Psi\|_{L^\infty(0,T;L^2(\R^+\times\R^2))}+\|\gn(\nl\Psi)\|_{L^\infty(0,T;L^2(\R^2))}\right.\\
&&\qquad\qquad\left.+\|\nl\Psi\|_{L^\infty(0,T;L^2(\R^+\times\R^2))}\right)^2.
\end{eqnarray*}

Thanks to (\ref{eq:Sobolev}) $\dH^1(\R^+\times\R^2)$ is continuously embedded in $L^6(\R^+\times\R^2)$, so for every $u\in \dH^1(\R^+\times\R^2)$, and almost every $t>0$
\begin{eqnarray*}
&&\left|\int_0^\infty\int_{\R^2}f(t) u\,dx\,dz\right|\leq \|f(t)\|_{L^{6/5}(\R^+\times\R^2)}\|u\|_{L^6(\R^+\times\R^2)}\\
&&\qquad \leq C \|f(t)\|_{L^{6/5}(\R^+\times\R^2)}\|\nabla u\|_{L^2(\R^+\times\R^2)}.
\end{eqnarray*}
Hence 
\begin{eqnarray*}
&&\|\fL\|_{L^1(0,T;[\dH^1(\R^+\times\R^2)]^*)}\leq C  \|\fL\|_{L^1(0,T;L^{6/5}(\R^+\times\R^2))},\\
&& \|\fnu\|_{L^{1}(0,T;\dH^{-1/2}(\R^2))}\leq C \|\fnu\|_{L^{1}(0,T;L^{4/3}(\R^2))}.
\end{eqnarray*}
Hence, from the Lax Milgram Theorem, there exists a unique $F$ solution to $\Ll F=\fL$, $\gn(\nl F)=\fnu$, with
$$
 \|\nl F\|_{L^1(0,T;L^2(\R^+\times\R^2))}\leq C\left( \|\fL\|_{L^1(0,T;L^{6/5}(\R^+\times\R^2))}+\|\fnu\|_{L^{1}(0,T;L^{4/3}(\R^2))}\right).
$$
\vskip0.3cm
Let $\phi\in C^\infty(\R^3)$  supported in $(-R,R)^3$, and $\Psi\in \dH^1(\R^+\times\R^2)$ such that $\Ll\Psi\in L^2(\R^+\times\R^2)$, and $\gn(\nl\Psi)\in L^2(\R^2)$. 
From Proposition \ref{prop:apriori}, $\nl\Psi\in L^4(\R^+;L^{8/3}(\R^2))$.

In order to integrate by part rigorously, let us proceed to a regularization.

For every $\eps>0$ consider the function $\bar{\eta}_\eps$ defined on $\R^2$ by
$$
\bar{\eta}_\eps(x)=\frac{1}{\eps^2}\bar{\eta}(x/\eps),\qquad x\in \R^2,
$$
where $\bar{\eta}$ is a given smooth function compactly supported, of integral 1. Then we denote
$$
\Psi_\eps=\Psi\ast \bar{\eta}_\eps.
$$
Note that we regularize in $x$ only.  
We have
$$
\nl\Psi_\eps =(\nl\Psi)\ast\bar{\eta}_\eps,\qquad \Ll\Psi_\eps=(\Ll\Psi)\ast \bar{\eta}_\eps,\qquad \gn(\Psi_\eps)=\gn(\Psi)\ast \bar{\eta}_\eps.
$$
Hence, using Proposition \ref{prop:apriori} , for every fixed $\eps>0$, and $n\geq0$
\begin{equation}\label{eq:reg}
\nl\Psi_\eps\in C^0(\R^+,\times \dH^{2n}(\R^2))\cap L^2(\R^+,\times \dH^{2n}(\R^2)),
\end{equation}
and we have
\begin{equation}\label{eq:limit}
\begin{array}{l}
\nl\Psi_\eps\stackrel{\eps\to \infty}{\longrightarrow} \nl\Psi,\qquad \mathrm{in} \ \ L^2(\R^+\times\R^2),\\
\nl\Psi_\eps\stackrel{\eps\to \infty}{\longrightarrow} \nl\Psi,\qquad \mathrm{in} \ \  L^4(\R^+;L^{8/3}(\R^2)),\\
\Ll\Psi_\eps\stackrel{\eps\to \infty}{\longrightarrow} \Ll\Psi,\qquad \mathrm{in} \ \ L^2(\R^+\times\R^2),\\
\gamma_0(\nl\Psi_\eps)\stackrel{\eps\to \infty}{\longrightarrow} \gamma_0(\gn\Psi),\qquad \mathrm{in} \ \ L^2(\R^2).
\end{array}
\end{equation}
Since $L^{4/3}(\R^2)$ is continuously imbedded in $\dH^{-1/2}(\R^2)$, we have
$$
\bar{\div}
\left(\bar{\nabla}^\perp\Psi_\eps\otimes\nl\Psi_\eps\right)\stackrel{\eps\to \infty}{\longrightarrow}
\bar{\div}
\left(\bar{\nabla}^\perp\Psi\otimes\nl\Psi\right),\qquad \mathrm{in} \ \ L^2(\R^+;\dH^{-3/2}(\R^2)).
$$
Since $\nabla\bar{\nabla}\Psi_\eps\in L^2(\R^+\times\R^2)$ and $\bar{\nabla} \Ll \Psi_\eps\in L^2(\R^+\times\R^2)$, we have 
\begin{eqnarray*}
&&\div(\bar{\nabla}^\perp\Psi_\eps\cdot\bar{\nabla}\nl\Psi_\eps)=\bar{\nabla}^\perp\nabla\Psi_\eps:\bar{\nabla}\nl\Psi_\eps+\bar{\nabla}^\perp\Psi_\eps\cdot\bar{\nabla}(\Ll\Psi_\eps),\\
&&\qquad =\bar{\nabla}^\perp\bar{\nabla}\Psi_\eps:\bar{\nabla}\bar{\nabla}\Psi_\eps+\lambda \partial_z(\bar{\nabla}^\perp\Psi_\eps)\cdot \partial_z(\bar{\nabla}\Psi_\eps)+\bar{\nabla}^\perp\Psi_\eps\cdot\bar{\nabla}(\Ll\Psi_\eps),\\
&&\qquad =\bar{\nabla}^\perp\Psi_\eps\cdot\bar{\nabla}(\Ll\Psi_\eps).
\end{eqnarray*}
Especially, it lies in $L^2(\R^+\times\R^2)$.  
\vskip0.3cm
Using Proposition \ref{prop:Hodgeextended}, the divergence theorem,  the last computation, and the incompressibility of the flow in $\R^+\times\R^2$, we find
\begin{eqnarray*}
&&\qquad  \int_0^\infty\int_{\R^2}\nabla\phi\cdot\bar{\div}(\bar{\nabla}^\perp\Psi_\eps\otimes\nl\Psi_\eps)\,dx\,dz\\
&& =-\int_0^\infty\int_{\R^2}\phi\ \div\left(\bar{\div}(\bar{\nabla}^\perp\Psi_\eps\otimes \nl\Psi_\eps)\right)\,dx\,dz\\
&&\qquad +\int_{\R^2}\phi(0,x) \ \bar{\div}\left(\bar{\nabla}^\perp\Psi_\eps(0,x)\gn(\nl\Psi_\eps)(x)\right)\,dx\\
&&=-\int_0^\infty\int_{\R^2}\phi\ \div\left(\bar{\nabla}^\perp\Psi_\eps\cdot\bar{\nabla} \nl\Psi_\eps)\right)\,dx\,dz\\
&&\qquad -\int_{\R^2}\bar{\nabla}\phi(0,x)\cdot\bar{\nabla}^\perp\Psi_\eps(0,x)\gn(\nl\Psi_\eps)\,dx\\
&&=-\int_0^\infty\int_{\R^2}\phi\ \bar{\nabla}^\perp\Psi_\eps\cdot\bar{\nabla}(\Ll\Psi_\eps)\,dx\,dz\\
&&\qquad -\int_{\R^2}\bar{\nabla}\phi(0,x)\cdot\bar{\nabla}^\perp\Psi_\eps(0,x)\gn(\nl\Psi_\eps)\,dx\\
&&=\int_0^\infty\int_{\R^2}\left(\bar{\nabla}\phi\cdot \bar{\nabla}^\perp\Psi_\eps\right)\Ll\Psi_\eps\,dx\,dz\\
&&\qquad -\int_{\R^2}\left(\bar{\nabla}\phi(0,x)\cdot\gamma_0(\bar{\nabla}^\perp\Psi_\eps)\right)\gn(\nl\Psi_\eps)\,dx.
\end{eqnarray*}
Passing into the limit when $\eps$ goes to 0, we find that  for any  $\phi\in C^\infty(\R^3)$  supported in $(-R,R)^3$, and $\Psi\in \dH^1(\R^+\times\R^2)$ such that $\Ll\Psi\in L^2(\R^+\times\R^2)$,  and $\gn(\nl\Psi)\in L^2(\R^2)$
\begin{equation}\label{eq:1}
\begin{array}{l}
\ds{\qquad\int_0^\infty\int_{\R^2}\nabla\phi\cdot
\left[\bar{\div}(\bar{\nabla}^\perp\Psi\otimes\nl\Psi)\right]\,dx\,dz}\\[0.3cm]
\ds{=\int_0^\infty\int_{\R^2}\left(\bar{\nabla}\phi\cdot \bar{\nabla}^\perp\Psi\right)\Ll\Psi\,dx\,dz}\\[0.3cm]
\ds{\qquad -\int_{\R^2}\left(\bar{\nabla}\phi(0,x)\cdot\bar{\nabla}^\perp\gamma_0(\bar{\nabla}\Psi)\right)\gn(\nl\Psi)\,dx}.
\end{array}
\end{equation}
\vskip0.3cm
We have also
\begin{eqnarray*}
&&-\beta\int_0^\infty\int_{\R^2} \nabla\phi\P_{\lambda}(\Psi e_1)\,dx\,dz=-\beta\int_0^\infty\int_{\R^2}( \partial_1\phi)\Psi\,dx\,dz\\
&&\qquad =\beta\int_0^\infty\int_{\R^2}\phi \ \partial_1\Psi\,dx\,dz=\beta\int_0^\infty\int_{\R^2}\phi \bar{\nabla}^{\perp}\Psi\cdot\bar{\nabla}x_2\,dx\,dz.
\end{eqnarray*}
\vskip0.3cm
Hence, for every $\Phi\in C^\infty(\R^4)$ supported in $(-T,T)\times(-R,R)^3$, and every $\Psi\in C^0(0,T;\dH^1(\R^+\times\R^2))$ such that $\Ll\Psi\in L^\infty(0,T;L^2(\R^+\times \R^2))$ and $\nl\Psi\in L^\infty(0,T;L^2(\R^2))$, we have
\begin{eqnarray*}
&&\int_{0}^\infty\int_{\R^2} \nabla\Phi(0,z,x)\nl\Psi^0(z,x)\,dx\,dz\\
&&-\int_0^T\int_{0}^\infty\int_{\R^2} [\partial_t\nabla\Phi\nl\Psi+\nabla\Phi\cdot(\nl F+\beta\P_{\lambda}(\Psi e_1))]\,dx\,dz\,dt\\
&&-\int_0^T\int_{0}^\infty\int_{\R^2} \bar{\nabla}\nabla\Phi:
(\bar{\nabla}^\perp\Psi\otimes\nl\Psi)\,dx\,dz\,dt\\
&&\qquad\qquad\qquad\qquad=\\
&&-\int_{0}^\infty\int_{\R^2} \Phi(0,z,x)\Ll\Psi^0(z,x)\,dx\,dz\\
&&+\int_0^T\int_{0}^\infty\int_{\R^2} [\partial_t\Phi\Ll\Psi+\Phi \fL]\,dx\,dz\,dt\\
&&+\int_0^T\int_{0}^\infty\int_{\R^2} \bar{\nabla}\Phi\cdot\bar{\nabla}^\perp\Psi(\Ll\Psi+\beta x_2)\,dx\,dz\,dt\\
&&\qquad\qquad\qquad\qquad+\\
&&\int_{\R^2} \Phi(0,0,x)\gn(\nl\Psi^0)\,dx\\
&&-\int_0^T\int_{\R^2} [\partial_t\Phi(t,0,x)\gn(\nl\Psi)+\Phi(t,0,x) \fnu]\,dx\,dt\\
&&-\int_0^T\int_{0}^\infty\int_{\R^2} \bar{\nabla}\Phi(t,0,x)\cdot\gamma_0(\bar{\nabla}^\perp\Psi)\gn(\nl\Psi)\,dx\,dt.
\end{eqnarray*}
\vskip0.3cm
If $\Psi$ verifies the third statement of Theorem \ref{theo:equivalence}, taking $\Phi_2=\nabla\phi$ gives that it is solution to (\ref{eq:QGreformule}), so the second statement is true. If $\Psi$ is solution to (\ref{eq:QGreformule}), taking $\Phi$ compactly supported in $z$ in $\R^+$, we find that it is solution to  (\ref{eq:QG}). Taking general $\Phi$, it gives now that (\ref{eq:QGtrace}) is also verified. 
Now if   $\Psi$ is solution to  (\ref{eq:QG}) (\ref{eq:QGtrace}) (\ref{eq:QGinitial}), then it is solution to  (\ref{eq:QGreformule}).
The weak formulation of (\ref{eq:QGreformule}) is still valid for test functions of the form $\phi_1(t)\nabla\phi_2(z,x)$, $\phi_1\in C^\infty_c$ and $\phi_2\in \mathcal{S}(\R^+\times\R^2)$, and 
\begin{eqnarray*}
&&D=-\int_0^T\dt\phi_1\nl\Psi+\int\phi_1(t)\nl F-\bar{\div}\int_0^T\phi_1(t)(\bar\nabla^\perp\Psi\otimes\nl\Psi)\\
&&\qquad\qquad -\beta\int_0^T\phi_1(t)\Psi \ e_1
\end{eqnarray*}
lies in $L^2(\R^+;H^1(\R^2)+\dH^{-3/2}(\R^2))$ and in $\mathcal{S}'(\R^+\times\R^2)$,
since $\nabla^\perp\Psi\otimes\nl\Psi\in L^2(\R^+;L^{4/3}(\R^2))$ wich is imbedded in $L^2(\R^+;\dH^{-1/2}(\R^2))$. Hence, from Proposition \ref{prop:Hodgeextended}, 
$$
\P_{\lambda} D=0.
$$
 \end{proof}

\section{Proof of  the stability Theorem \ref{prop:stability} }

 We start with the following Proposition.
 \begin{prop}\label{prop:convergence}
 Take  $1<q\leq\infty$, and $T>0$. Consider $\fL^n$ and $\fnu^n$ uniformly bounded, respectively, in $ L^q(0,T;L^2(\R^+\times\R^2))\cap  L^q(0,T;L^{6/5}(\R^+\times\R^2))$ and in $ L^q(0,T;L^2(\R^2))\cap  L^{q}(0,T;L^{4/3}(\R^2))$. Consider $\eps_n\geq0$ converging to zero.
Let $\Psi_n$ be a sequence of weak solutions to 
\begin{equation*}
\left\{
\begin{array}{l}
\partial_t\nabla_\lambda\Psi_n+\P_{\lambda}(\bar\nabla\Psi_n^\perp\cdot\bar\nabla\nabla_\lambda\Psi_n)\\
\qquad\qquad=\eps_n(\bar{\Delta}^{1/2}\nl\Psi+\bar{\Delta}^{3/2}\nl\Psi)+\beta\  \P_{\lambda}(\Psi_n e_1)+\nl F_n,
\end{array}\right.\end{equation*}
such that 
$\nl \Psi_n\in C^0(0,T;L^2_{\mathrm{loc}}(\R^+\times\R^2))$, $\Ll \Psi_n$ and $\gn(\nl \Psi_n)$ are uniformly bounded, respectively, in the following spaces $L^\infty(0,T; L^2(\R^+\times\R^2))$, 
$L^\infty(0,T; L^2(\R^+\times\R^2))$, and $ L^\infty(0,T;L^2(\R^2))$. Assume that the initial values $\nl\Psi^0_n$ converges to $\nl\Psi^0$ in $L^2(\R^+\times\R^2)$. Then, there exists $\Psi$ solution 
to  (\ref{eq:QG}) (\ref{eq:QGtrace}) (\ref{eq:QGinitial}), 
such that, up to a subsequence, $\nabla \Psi_n$ converges to $\nabla \Psi$ in 
$C^0(0,T; L^2_{\mathrm{loc}}(\R^+\times \R^2))$.
 \end{prop}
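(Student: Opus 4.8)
The plan is to run a compactness argument (Aubin--Lions--Simon type) on the reformulated equation \eqref{eq:QGreformule}, exploiting that $\P_\lambda$ is continuous on all the relevant spaces (Propositions \ref{prop:HodgeL2} and \ref{prop:Hodgeextended}) and commutes with $\bar\nabla\cdot$, so that the boundary trace $\gn(\nl\Psi_n)$ need never be known to converge strongly. First I collect the uniform bounds. From the hypotheses on $\nl\Psi_n$, $\Ll\Psi_n$ and $\gn(\nl\Psi_n)$ in the $L^\infty(0,T;L^2)$ spaces, together with Proposition \ref{prop:apriori}, the sequence $\nl\Psi_n$ is bounded in $L^\infty(0,T;L^2(\R^+;H^{1/2}(\R^2)))$, in $L^\infty(0,T;L^3(\R^+\times\R^2)\cap L^4(\R^+;L^{8/3}(\R^2)))$, $\gamma_0(\nl\Psi_n)$ is bounded in $L^\infty(0,T;L^2(\R^2))$, and (being in $\dH^1$) $\Psi_n$ is bounded, after the usual normalization of the additive constant, in $L^\infty(0,T;L^6(\R^+\times\R^2))$. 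Moreover, writing $\partial_z(\lambda\partial_z\Psi_n)=\Ll\Psi_n-\bar\Delta\Psi_n$ and $\bar\nabla\partial_z\Psi_n=\lambda^{-1}\bar\nabla(\lambda\partial_z\Psi_n)$, the $L^2(H^{1/2})$-bound on $\nl\Psi_n$ and the $L^2$-bound on $\Ll\Psi_n$ give a uniform bound for $\partial_z\nl\Psi_n$ in $L^\infty(0,T;L^2(\R^+;H^{-1/2}(\R^2)))$. Thus, on every bounded box $(0,R)\times(-R,R)^2$, $\nl\Psi_n$ is bounded in an anisotropic Sobolev space (half a derivative in each of $z$ and $x$) which embeds compactly into $L^2$.

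Next the time derivative. Using \eqref{eq:QGreformule} with the extra viscous term, $\dt\nl\Psi_n$ is the sum of: $-\P_\lambda\bar\div(\bar\nabla^\perp\Psi_n\otimes\nl\Psi_n)$, which by the first estimate of Theorem \ref{theo:equivalence} and the continuity of $\P_\lambda$ (Proposition \ref{prop:Hodgeextended}) is bounded in $L^\infty(0,T;L^2(\R^+;\dH^{-3/2}(\R^2)))$; the term $\eps_n(\bar\Delta^{1/2}\nl\Psi_n+\bar\Delta^{3/2}\nl\Psi_n)$, which (using $|\xi|^3\le(1+|\xi|^2)^{3/2}$) is bounded in $L^\infty(0,T;L^2(\R^+;H^{-3}(\R^2)))$ \emph{and tends to $0$} there since $\eps_n\to0$; $\beta\P_\lambda(\Psi_n e_1)$, bounded in $L^\infty(0,T;L^2(\R^+;\dH^{1}(\R^2)))$ because $\bar\nabla\Psi_n$ is bounded in $L^\infty L^2$; and $\nl F_n$, bounded in $L^q(0,T;L^2(\R^+\times\R^2))$ by the $F$-estimate of Theorem \ref{theo:equivalence} applied pointwise in $t$ and the assumptions on $\fL^n,\fnu^n$. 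Hence $\dt\nl\Psi_n$ is bounded in $L^q(0,T;L^2(\R^+;H^{-3}(\R^2)))$ with $q>1$.

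With these bounds I apply the Aubin--Lions--Simon lemma on each box (the chain: anisotropic Sobolev $\hookrightarrow\hookrightarrow L^2\hookrightarrow H^{-3}$ on a bounded domain, $q>1$ giving uniform equicontinuity in time), plus a diagonal extraction over $R$, to obtain a subsequence with $\nl\Psi_n\to\nl\Psi$ in $C^0(0,T;L^2_{\mathrm{loc}}(\R^+\times\R^2))$, hence $\nabla\Psi_n\to\nabla\Psi$ in $C^0(0,T;L^2_{\mathrm{loc}})$, while simultaneously $\nl\Psi_n\wto\nl\Psi$ weak-$*$ in $L^\infty(0,T;L^2(\R^+;H^{1/2}(\R^2)))$, $\Ll\Psi_n\wto\Ll\Psi$ and $\gn(\nl\Psi_n)\wto\gn(\nl\Psi)$ weak-$*$ in the corresponding $L^2$ spaces (the trace operators of Lemma \ref{lemm:trace} being bounded linear, hence weakly continuous), $\Psi_n\to\Psi$ in $C^0(0,T;L^p_{\mathrm{loc}})$ for $p<6$, and $\fL^n\wto\fL$, $\fnu^n\wto\fnu$ (so $\nl F_n\wto\nl F$ weakly in $L^q(L^2)$). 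Interpolating the strong $C^0(L^2_{\mathrm{loc}})$ convergence with the uniform $L^\infty(0,T;L^3)$ bound, $\bar\nabla^\perp\Psi_n\otimes\nl\Psi_n\to\bar\nabla^\perp\Psi\otimes\nl\Psi$ in $C^0(0,T;L^p_{\mathrm{loc}})$ for some $p>1$; combined with its uniform bound in $L^\infty(0,T;L^2(\R^+;\dH^{-1/2}(\R^2)))$, this lets me pass to the limit in the weak formulation of \eqref{eq:QGreformule} tested against smooth compactly supported $\nabla\phi$ (after integrating by parts so that only $\bar\nabla\nabla\phi:(\bar\nabla^\perp\Psi_n\otimes\nl\Psi_n)$ appears): the $\eps_n$-terms vanish, the nonlinear term converges, and the linear terms in $\Psi_n$ and $\nl F_n$ pass by weak convergence. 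Thus $\nl\Psi$ is a weak solution of \eqref{eq:QGreformule}, so by Theorem \ref{theo:equivalence} $\Psi$ is a weak solution of \eqref{eq:QG}, \eqref{eq:QGtrace}, \eqref{eq:QGinitial} with data $\fL,\fnu$, and the $C^0$-in-time convergence together with $\nl\Psi^0_n\to\nl\Psi^0$ in $L^2$ transfers the initial condition. The main obstacle, exactly as stressed after Theorem \ref{prop:stability}, is that there is \emph{no} compactness for $\gn(\nl\Psi_n)$, so one cannot pass to the limit directly in \eqref{eq:QGtrace}, where the product $\gamma_0(\bar\nabla^\perp\Psi_n)\,\gn(\nl\Psi_n)$ of two merely weakly convergent sequences appears; the whole point of working with \eqref{eq:QGreformule} is that this term is hidden inside $\P_\lambda(\bar\nabla\Psi_n^\perp\cdot\bar\nabla\nl\Psi_n)$, for which only compactness of $\bar\nabla^\perp\Psi_n\otimes\nl\Psi_n$ is required. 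The secondary technical difficulty is extracting enough uniform $z$-regularity (via $\partial_z(\lambda\partial_z\Psi_n)=\Ll\Psi_n-\bar\Delta\Psi_n$) to upgrade the $x$-only gain of Proposition \ref{prop:apriori} to joint $(z,x)$-compactness, and keeping careful track of the exponents contributed by the $\eps_n$-regularization and the $L^q$ forcing in the time-derivative estimate.
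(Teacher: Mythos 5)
Your argument is correct and follows essentially the same route as the paper: uniform $H^{1/2}$-type bounds in $(z,x)$ obtained from Proposition \ref{prop:apriori} together with the identity $\partial_z(\lambda\partial_z\Psi_n)=\Ll\Psi_n-\bar\Delta\Psi_n$, a time-derivative bound in a negative Sobolev space from Theorem \ref{theo:equivalence}, Aubin--Lions on bounded boxes with a diagonal extraction, and passage to the limit in the weak formulation of (\ref{eq:QGreformule}) rather than in (\ref{eq:QGtrace}). Your tracking of the $L^q$-in-time integrability ($q>1$) coming from the forcing and of the vanishing $\eps_n$-terms is a slightly more careful bookkeeping of details the paper leaves implicit, but it is not a different method.
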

Theorem \ref{prop:stability} is a direct consequence of this proposition taking $\eps_n=0$ for all $n$, and using Theorem \ref{theo:equivalence}.
\begin{proof}
From Proposition \ref{prop:apriori},  $\nl \Psi_n$ and $\nabla \Psi_n$ are uniformly bounded in 
$L^\infty(0,T;L^2(\R^+\times\R^2))$, and   $L^\infty(0,T;L^2(\R^+;H^{1/2}(\R^2)))$.
Considering the vertical component of $\nabla \Psi_n$, this gives that $\partial_z\bar{\Delta}^{1/2}\Psi_n$ is uniformly bounded in $L^\infty(0,T;L^2(\R^+;\dH^{-1/2}(\R^2)))$, and using the horizontal component, $\bar{\Delta}^{1/2}\Psi_n$ is uniformly bounded in $L^\infty(0,T;L^2(\R^+;\dH^{1/2}(\R^2)))$.  So, using  (\ref{eq:Interpolation}), this gives 
that $\bar{\Delta}^{1/2}\Psi_n$, and so $\bar{\nabla}\Psi_n$ is uniformly bounded in $L^\infty(0,T;\dH^{1/2}(\R^+\times\R^2))$. 
\vskip0.3cm
The sequence  $\partial_z(\lambda\partial_z\Psi_n)=-\bar{\Delta}\Psi_n+\Ll\Psi_n$, is uniformly bounded in $L^\infty(0,T;L^2(\R^+;H^{-1/2}(\R^2)))$. We have already shown that $\bar{\nabla}(\lambda\partial_z\Psi_n)\in L^\infty(0,T;L^2(\R^+;H^{-1/2}(\R^2)))$. Hence, the sequence $\lambda\partial_z\Psi_n$ is  bounded in $L^\infty(0,T;L^2(\R^+;H^{1/2}(\R^2)))$ uniformly with respect to $n$. So, by interpolation, $\lambda\partial_z\Psi_n$ is bounded in $L^\infty(0,T;\dH^{1/2}(\R^+\times\R^2))$.
Finally  the sequence $\nl\Psi_n$ is uniformly bounded in $L^\infty(0,T;H^{1/2}(\R^+\times\R^2))$.
\vskip0.3cm
From Theorem \ref{theo:equivalence},  $\dt\nl\Psi_n$  is  bounded in $L^\infty(0,T;L^2(\R^+;H^{-3}(\R^2)))$ uniformly with respect to $n$.
\vskip0.3cm
For every $T>0$, $R>0$, consider 
\begin{eqnarray*}
&&X_{-1}=L^2(0,R;H^{-3}((-R,R)^2)), \qquad X_0=L^2((0,R)\times(-R,R)^2),\\
&& \qquad X_1=H^{1/2}((0,R)\times(-R,R)^2).
\end{eqnarray*}
So those Banach spaces are such that $X_0$ is continuously imbedded in $X_{-1}$ and $X_1$ is compactly imbedded in $X_0$.
We have shown that $\dt\nl\Psi_n$ is uniformly bounded in $L^\infty(0,T;X_{-1})$, $\nl\Psi_n$ is uniformly bounded in $C^0(0,T;X_0)$, and in $L^\infty(0,T;X_1)$. Hence, using the Aubin-Lions lemma, if $\nl\psi^0_n$ converges to $\nl\Psi_0$ in $X_0$, then 
$\nl\Psi_n$ converges, up to a subsequence,  to $\Psi$ in $C^0(0,T,X_0)=C(0,T;L^2((-R,0)\times(-R,R)^2))$.
So,  passing into the limit of the weak formulation of (\ref{eq:QGreformule}), up to a subsequence, $\nl\Psi_n$ converges to a solution to the quasi-geotrophic equation.
\end{proof}

\section{Proof of  the existence Theorem \ref{theo:main} }

This section is dedicated to the construction of the solution to the quasi-geostrophic equation.  It is done in several steps.
\vskip0.3cm
\noindent
{\it First step:}  For every $R\in L^\infty (0,T; L^2(\R^+;\dH^{-1}(\R^2)))$, we construct a solution to 
\begin{equation}\label{eq:*}
\begin{array}{l}
\ds{\partial_t\nl\Psi+\P_{\lambda} R=\eps (\bar{\Delta}^{1/2}\nabla_\lambda\Psi+\bar{\Delta}^{3/2}\nabla_\lambda\Psi) }\\
\ds{\nabla_\lambda\Psi(t=0)=\nl\Psi^0}.
\end{array}
\end{equation}
To do this, we first notice that $\P_\lambda R\in L^\infty(0,T;L^2(\R^+;\dH^{-1}(\R^2)))$. Then we construct the solution $z$ by $z$, in Fourier space in $x$ only, to
\begin{equation*}
\begin{array}{l}
\ds{\partial_t\Phi+\P_{\lambda} R=\eps (\bar{\Delta}^{1/2}\Phi+\bar{\Delta}^{3/2}\Phi) }\\
\ds{\Phi(t=0)=\nl\Psi^0}.
\end{array}
\end{equation*}
Then we find $\nl\Psi=\P_{\lambda}\Phi$.

\vskip0.3cm
\noindent{\it Step2:} We consider again the regularization in $x$ only $\bar{\eta}_\delta$. For every $\nl \tilde\Psi\in L^\infty(0,T;L^2(\R^+\times\R^2))$ we define $T_{\delta}(\nl \tilde\Psi)=\nl\Psi$ as the solution to (\ref{eq:*}) with 
$$
R=\left(\bar{\nabla}^\perp\tilde\Psi\ast\bar\eta_\delta\right)\cdot \bar{\nabla}\nl\tilde\Psi.
$$
As long as 
$$
\|\nl\tilde\Psi(t)\|_{L^2(\R^+\times\R^2)}\leq 2 \|\nl\Psi^0\|_{L^2(\R^+\times\R^2)},
$$
we have

\begin{eqnarray*}
&&\qquad\frac{d}{dt}\int_0^\infty\int_{\R^2} |T_{\delta}(\nl\tilde\Psi)|^2\,dz\,dx\\
&&+\eps\left\{\int_0^\infty\int_{\R^2}|\bar{\Delta}^{1/4}T_{\delta}(\nl\tilde\Psi)|^2+|\bar{\Delta}^{3/4} T_{\delta}(\nl\tilde\Psi)|^2\right\}\,dz\,dx\\
&&\qquad \leq \int_0^\infty\int_{\R^2} (\bar{\nabla} T_{\delta}(\nl\tilde\Psi)) :[\bar\nabla^\perp(\tilde\Psi\ast\bar{\eta}_\delta)\otimes \nabla_\lambda \tilde\Psi]\,dz\,dx.\\
&&\qquad\leq \frac{\eps}{2}\|\bar{\Delta}^{1/2}T_{\delta}(\nl\tilde\Psi)\|^2_{L^2(\R^+\times\R^2)}\\
&&\qquad\qquad +\frac{2}{\eps} \|\nl\tilde{\Psi}\|^2_{L^2(\R^+\times\R^2)}\|\tilde{\Psi}\ast \bar{\nabla}\bar{\eta}_{\delta}\|^2_{L^\infty(\R^+\times\R^2)}.
\end{eqnarray*}
But we have 
\begin{eqnarray*}
&&\qquad\qquad\|\tilde\Psi\ast \bar\nabla\bar\eta_\delta\|_{L^\infty(\R^+\times\R^2)}\\
&&\leq C \left(\|\partial_z\tilde\Psi\ast\bar\nabla\bar\eta_\delta\|_{L^2(\R^+;L^\infty(\R^2))}+\|\tilde\Psi\ast\bar\nabla\bar\eta_\delta\|_{L^2(\R^+;L^\infty(\R^2))}\right)\\
&&\leq C \|\nl\tilde{\Psi}\|_{L^2(\R^+\times\R^2)}\left(\|\bar\nabla\bar\eta_\delta\|_{L^2}+\|\bar\eta_\delta\|_{L^2}\right)\\
&&\leq C\left(\frac{1}{\delta^2}+\frac{1}{\delta}\right)\|\nl\tilde{\Psi}\|_{L^2(\R^+\times\R^2)}.
\end{eqnarray*}
So, for $\delta<1$, we have 
\begin{eqnarray*}
&&\frac{d}{dt}\int_0^\infty\int_{\R^2} |T_{\delta}(\nl\tilde\Psi)|^2\,dz\,dx\\
&&\qquad\leq \frac{C}{\eps\delta^4}\|\nl\tilde\Psi\|^4_{L^2(\R^+\times \R^2)},
\end{eqnarray*}
and we have

$$
\|T_{\delta}(\nl\tilde\Psi)(t)\|^2_{L^2(\R^+\times\R^2)}\leq  \|\nl\Psi^0\|^2_{L^2(\R^+\times\R^2)}+\frac{2C}{\eps\delta^4}t\|\nl\Psi^0\|^4_{L^2(\R^+\times\R^2)}.
$$
So for 
$$
t_0= \frac{\eps\delta^4}{4C\|\nl\Psi^0\|^2_{L^2}},
$$
the function $T_\delta$ goes from the Banach space
$$
\mathcal{C}=\left\{f\in C^0(0,t_0;L^2(\R^+\times\R^2)); \ \|f\|_{C^0(0,t_0;L^2(\R^+\times\R^2))\leq 2\|\nl\Psi^0\|_{L^2}}\right\}
$$
into itself. Moreover, $T_\delta$ is contractive on $\mathcal{C}$ (with constant $1/2$). Hence, from the Picard fixed point Theorem, There exists a fixed point on $\mathcal{C}$. But then, the fixed point $\nl\Psi_\delta$ is solution on $[0,t_0]$ to
$$
\partial_t\nl\Psi_\delta+\P_\lambda(\left(\bar{\nabla}^\perp\Psi_\delta\ast\bar\eta_\delta\right)\cdot \bar{\nabla}\nl\Psi_\delta)=\eps (\bar{\Delta}^{1/2}\nabla_\lambda\Psi_\delta+\bar{\Delta}^{3/2}\nabla_\lambda\Psi_\delta).
$$ 
So, for every  $t\in [0,t_0]$, $\|\nl\Psi_\delta(t)\|_{L^2}\leq\|\nl\Psi^0\|_{L^2}$. And we can construct in the same way the solution recursively on $[nt_0,(n+1)t_0]$ and finally for $t\in \R$.
\vskip0.3cm
The energy inequality gives that 
\begin{equation}\label{uneetoile}\begin{array}{rcl}
&&\qquad\qquad \|\nl\Psi_\delta\|^2_{L^\infty(0,T;L^2(\R^+\times\R^2))}\\
&&+\eps\left(\|\bar{\Delta}^{1/4}\nl\Psi_\delta\|^2_{L^2((0,T)\times\R^+\times\R^2)}+\|\bar{\Delta}^{3/4}\nl\Psi_\delta\|^2_{L^2((0,T)\times\R^+\times\R^2)}\right)\\
&&\qquad\qquad \leq \|\nl\Psi^0\|^2_{L^2(\R^+\times\R^2)}.
\end{array}\end{equation}
At $z=0$, we have
\begin{eqnarray*}
&&\partial_t\gn(\nl\Psi_\delta)+\left(\bar{\nabla}^\perp\gamma_0(\Psi_\delta)\ast\bar\eta_\delta\right)\cdot \bar{\nabla}\gn(\nl\Psi_\delta))\\
&&\qquad\qquad=\eps (\bar{\Delta}^{1/2}\gn(\nabla_\lambda\Psi_\delta)+\bar{\Delta}^{3/2}\gn(\nabla_\lambda\Psi_\delta)).
\end{eqnarray*}
The energy inequality on the plane $z=0$ gives
\begin{equation}\label{deuxetoiles}\begin{array}{rcl}
&&\qquad\qquad \|\gn(\nl\Psi_\delta)\|^2_{L^\infty(0,T;L^2(\R^2))}\\
&&+\eps\left(\|\bar{\Delta}^{1/4}\gn(\nl\Psi_\delta)\|^2_{L^2((0,T)\times\R^2)}+\|\bar{\Delta}^{3/4}\gn(\nl\Psi_\delta)\|^2_{L^2((0,T)\times\R^2)}\right)\\
&&\qquad\qquad \leq \|\gn(\nl\Psi^0)\|^2_{L^2(\R^2)}.
\end{array}\end{equation}


\vskip0.3cm
\noindent{\it Step3:} Taking the divergence of the equation we find that 
\begin{eqnarray*}
&&\qquad\frac{d}{dt}\int_0^\infty\int_{\R^2} |\Ll\Psi_\delta|^2\,dz\,dx\\
&&+\eps\left\{\int_0^\infty\int_{\R^2}|\bar{\nabla}\bar{\Delta}^{-1/4}\Ll\Psi_\delta|^2+|\bar{\nabla}\bar{\Delta}^{1/4}\Ll\Psi_\delta|^2\right\}\,dz\,dx\\
&&\qquad \leq \int_0^\infty\int_{\R^2} (\Ll\Psi_\delta) \bar{\nabla}\nabla(\Psi_\delta\ast\bar{\eta}_\delta):\bar\nabla^\perp \nl\Psi_\delta\,dz\,dx.\\
&&\leq \|\bar{\nabla}\nabla\Psi_\delta\|_{L^\infty(\R^+;L^4(\R^2))}\|\bar{\nabla}\nl\Psi_\delta\|_{L^2(\R^+;L^4(\R^2))}\|\Ll\Psi_\delta\|_{L^2(\R^+\times\R^2)}\\
&&\leq \|\bar{\Delta}^{3/4}\nabla\Psi_\delta\|_{L^\infty(\R^+;L^2(\R^2))}\|\bar{\Delta}^{3/4}\nl\Psi_\delta\|_{L^2(\R^+;L^2(\R^2))}\|\Ll\Psi_\delta\|_{L^2(\R^+\times\R^2)}.
\end{eqnarray*}

From Proposition \ref{prop:apriori},
\begin{eqnarray*}
&&\qquad \|\bar{\Delta}^{3/4}\nabla\Psi_\delta\|_{L^\infty(\R^+;L^2(\R^2))}\\
&&\leq \|\bar{\Delta}^{3/4}\nabla\Psi_\delta\|_{L^2(\R^+\times\R^2)}+ \|\bar{\Delta}^{3/4}\Ll\Psi_\delta\|_{L^2(\R^+\times\R^2)}\\
&&\qquad\qquad +\|\gn(\bar{\Delta}^{3/4}\nl\Psi_\delta)\|_{L^2(\R^2)}.
\end{eqnarray*}

It gives 
\begin{eqnarray*}
&&\qquad\frac{d}{dt}\int_0^\infty\int_{\R^2} |\Ll\Psi_\delta|^2\,dz\,dx\\
&&+\eps\left\{\int_0^\infty\int_{\R^2}|\bar{\nabla}\bar{\Delta}^{-1/4}\Ll\Psi_\delta|^2+|\bar{\nabla}\bar{\Delta}^{1/4}\Ll\Psi_\delta|^2\right\}\,dz\,dx\\
&&\leq\left(\|\bar{\Delta}^{3/4}\nabla\Psi_\delta\|_{L^2(\R^+\times\R^2)}+ \|\bar{\Delta}^{3/4}\Ll\Psi_\delta\|_{L^2(\R^+\times\R^2)}+\|\gn(\bar{\Delta}^{3/4}\nl\Psi_\delta)\|_{L^2(\R^2)}\right)\\
&&\qquad\qquad  \|\bar{\Delta}^{3/4}\nabla\Psi_\delta\|_{L^2(\R^+;L^2(\R^2))}\|\Ll\Psi_\delta\|_{L^2(\R^+\times\R^2)}\\
&& \leq \|\bar{\Delta}^{3/4}\nabla\Psi_\delta\|^2_{L^2(\R^+\times\R^2)}+\frac{\eps}{2} \|\bar{\Delta}^{3/4}\Ll\Psi_\delta\|^2_{L^2(\R^+\times\R^2)}+\|\gn(\bar{\Delta}^{3/4}\nl\Psi_\delta)\|^2_{L^2(\R^2)}\\
&&\qquad\qquad +(2+\frac{2}{\eps})\|\bar{\Delta}^{3/4}\nabla\Psi_\delta\|^2_{L^2(\R^+\times\R^2)}\|\Ll\Psi_\delta\|^2_{L^2(\R^+\times\R^2)}
\end{eqnarray*}

Hence:
\begin{eqnarray*}
&&\qquad\frac{d}{dt}\int_0^\infty\int_{\R^2} |\Ll\Psi_\delta|^2\,dz\,dx\\
&&\leq G_\eps(t)\left(\int_0^\infty\int_{\R^2} |\Ll\Psi_\delta|^2\,dz\,dx+1\right),
\end{eqnarray*}
with 
$$
G_\eps(t)=(\frac{2}{\eps}+2)\|\bar{\Delta}^{3/4}\nabla\Psi_\delta\|^2_{L^2(\R^+\times\R^2)}+\|\gn(\bar{\Delta}^{3/4}\nl\Psi_\delta)\|^2_{L^2(\R^2)}.
$$

Thanks to (\ref{uneetoile}) and (\ref{deuxetoiles}), the Function $G_\eps$ is bounded in $L^1(0,T)$, uniformly with respect to $\delta$.
So $\| \Ll\Psi_\delta\|_{L^\infty(0,T;L^2(\R^+\times\R^2))}$ is uniformly bounded with respect to $\delta$ and  we can pass to the limit 
$\delta\to 0$ using Proposition \ref{prop:convergence}. 
\vskip0.3cm
For $\delta=0$, we have now 
$$
\frac{d}{dt}\int_0^\infty\int_{\R^2} |\Ll\Psi_\eps|^2\,dz\,dx=0.
$$
So  $\|\Ll\Psi_\eps\|_{L^\infty(0,T;L^2(\R^+\times\R^2))}$ is uniformly bounded with respect to $\eps$, and we can pass into the limit 
when $\eps$ goes to 0 thanks to Proposition \ref{prop:convergence}. 

  \bibliography{biblio}

\begin{thebibliography}{10}

\bibitem{Abramov}
R.~Abramov and A.~Majda.
\newblock Low frequency climate response of quasigeostrophic wind-driven ocean
  circulation.
\newblock {\em To appear in Journal of Physical Oceanography}, 2011.

\bibitem{Beale}
A.~Bourgeois and Th. Beale.
\newblock Validity of the quasigeostrophic model for large-scale flow in the
  atmosphere and ocean.
\newblock {\em SIAM J. Math. Anal.}, 25(4):1023--1068, 1994.

\bibitem{Constantin_Lecture}
P.~Constantin.
\newblock Euler equations, {N}avier-{S}tokes equations and turbulence.
\newblock In {\em Mathematical foundation of turbulent viscous flows}, volume
  1871 of {\em Lecture Notes in Math.}, pages 1--43. Springer, Berlin, 2006.

\bibitem{Tabak}
P.~Constantin, A.~Majda, and E.~Tabak.
\newblock Formation of strong fronts in the {$2$}-{D} quasigeostrophic thermal
  active scalar.
\newblock {\em Nonlinearity}, 7(6):1495--1533, 1994.

\bibitem{Grenier}
B.~Desjardins and E.~Grenier.
\newblock Derivation of quasi-geostrophic potential vorticity equations.
\newblock {\em Adv. Differential Equations}, 3(5):715--752, 1998.

\bibitem{Held}
I.~M. Held, R.~Pierrehumbert, S.~Garner, and K.~Swanson.
\newblock Surface quasi-geostrophic dynamics.
\newblock {\em J. Fluid Mech.}, 282:1--20, 1995.

\bibitem{Lions}
P.-L. Lions.
\newblock {\em Mathematical topics in fluid mechanics. {V}ol. 1}, volume~3 of
  {\em Oxford Lecture Series in Mathematics and its Applications}.
\newblock The Clarendon Press, Oxford University Press, New York, 1996.
\newblock Incompressible models, Oxford Science Publications.

\bibitem{Pedlosky}
J.~Pedlosky.
\newblock {\em Geophysical Fluid Dynamics}.
\newblock Springer, 1992.

\bibitem{Salmon}
R.~Salmon.
\newblock {\em Lectures on geophysical fluid dynamics}.
\newblock Oxford University Press, New York, 1998.

\bibitem{Temam}
R.~Temam.
\newblock {\em Navier-{S}tokes equations}.
\newblock AMS Chelsea Publishing, Providence, RI, 2001.
\newblock Theory and numerical analysis, Reprint of the 1984 edition.

\bibitem{Williams}
P.~Williams, P.~Read, and Th. Haine.
\newblock Testing the limits of quasi-geostrophic theory: application to
  observed laboratory flows outside the quasi-geostrophic regime.
\newblock {\em J. Fluid Mech.}, 649:187--203, 2010.

\end{thebibliography}
\bibliographystyle{plain}

\end{document}